\documentclass[a4paper,10pt]{amsart}
\usepackage{enumerate}
\usepackage{amsthm,amsmath,amssymb,graphics,graphicx,hyperref,epstopdf,mathrsfs}
\usepackage{breqn}      
\usepackage{tikz}     
\setlength\textwidth{5.5in}

\setlength\oddsidemargin{.5in}
\setlength\evensidemargin{.5in}

\setlength\topmargin{0in}
\setlength\textheight{8in}

\title{Monomial multiplicities in explicit form}
\author{Guillermo Alesandroni}
\address{2000 Rosario, Santa Fe, Argentina}
\email{guillea@okstate.edu, alesandronig@yahoo.com}

\newtheorem{theorem}{Theorem}[section]
\newtheorem{proposition}[theorem]{Proposition}
\newtheorem{corollary}[theorem]{Corollary}
\newtheorem{lemma}[theorem]{Lemma}

\theoremstyle{definition}
\newtheorem{definition}[theorem]{Definition}
\newtheorem{remark}[theorem]{Remark}
\newtheorem{example}[theorem]{Example}
\newtheorem{construction}[theorem]{Construction}

\DeclareMathOperator{\betti}{b}
\DeclareMathOperator{\pd}{pd}
\DeclareMathOperator{\mdeg}{mdeg}
\DeclareMathOperator{\lcm}{lcm}

\DeclareMathOperator{\Max}{max}
\DeclareMathOperator{\reg}{reg}
\DeclareMathOperator{\hdeg}{hdeg}

\DeclareMathOperator{\pol}{pol}
\DeclareMathOperator{\codim}{codim}
\DeclareMathOperator{\degree}{d}
\DeclareMathOperator{\cdegree}{c}
\DeclareMathOperator{\e}{e}

\begin{document}
\maketitle
\begin{abstract}
Denote by $S$ a polynomial ring over a field, and let $M=(m_1,\ldots,m_q)$ be a monomial ideal of $S$. If $\codim(S/M) =1$, we prove that its multiplicity is given by
\[\e(S/M)= \deg(\gcd(m_1,\ldots,m_q)).\]
 On the other hand, if $M$ is a complete intersection, and $M'=(m_1,\ldots,m_q,m)$ is an almost complete intersection, we show that
 \[\e(S/M') = \prod\limits_{i=1}^q \deg(m_i) - \prod\limits_{i=1}^q \deg\left(\dfrac{m_i}{\gcd(m_i,m)}\right).\]
 We also introduce a new class of ideals that, broadly speaking, extends the family of monomial complete intersections and that of codimension 1 ideals, and give an explicit formula for their multiplicity. 

\end{abstract}

\section{Introduction}

Although the concept of multiplicity has been in our midst for a long time, an intensive research of this invariant was triggered by a series of conjectured multiplicity bounds introduced by Huneke, Herzog, and Srinivasan [HS, HS1] in the late 1990's and the early 2000's. As a result of the efforts of many mathematicians, some of these conjectures were proved in particular cases [FS], and finally, the general case was established using Boij-S\"oderberg theory [BS, EFW, ES].

With the multiplicity bounds still fresh in our minds, we now set the focus on a different target; finding exact values of the multipicities of classes of ideals. For instance, if $S$ represents a polynomial ring over a field, and $M=(m_1,\ldots,m_q)$ is a monomial ideal of $S$, we prove the following:

\begin{enumerate}[(i)]
\item  If $\codim(S/M) = 1$, then $\e(S/M)= \deg(\gcd(m_1,\ldots,m_q))$.
 \item  If $M$ is a complete intersection, and $M'=(m_1,\ldots,m_q,m)$ is an almost complete intersection, then $\e(S/M') = \prod\limits_{i=1}^q \deg(m_i) - \prod\limits_{i=1}^q \deg\left(\dfrac{m_i}{\gcd(m_i,m)}\right)$.
\end{enumerate}

Using monomial complete intersections as starting point and the formula for their multiplicities as inspiration, we construct a larger family of monomial ideals, and describe their multiplicities both algebraically and graphically.  

The organization of this article is as follows. In Section 2, we give the necessary background to understand this work. In Section 3, we express the multiplicities of ideals of codimension 1 in explicit form. In Section 4, we define the concept of stem ideal, which extends that of monomial complete intersection, and compute its multiplicity explicitly. In Section 5, we compute multiplicities in an even more general setting. In Section 6, we introduce a new approach, and use it to compute the multiplicities of monomial almost complete intersections.

\section{Background and notation}

Throughout this paper $S$ represents a polynomial ring in $n$ variables over a field. In some examples, $n$ takes a specific value, and  the variables are denoted with the letters $a$, $b$, $c$, etc. Everywhere else, $n$ is arbitrary, and $S$ is denoted $S=k[x_1,\ldots, x_n]$. The letter $M$ always represents a monomial ideal in $S$. 

We open this section by defining the Taylor resolution as a multigraded free resolution, something that will turn out to be fundamental in the present work. The construction that we give below can be found in [Me].
 
\begin{construction}
Let $M=(m_1,\ldots,m_q)$. For every subset $\{m_{i_1},\ldots,m_{i_s}\}$ of $\{m_1,\ldots,m_q\}$, with $1\leq i_1<\ldots<i_s\leq q$, 
we create a formal symbol $[m_{i_1},\ldots,m_{i_s}]$, called a \textbf{Taylor symbol}. The Taylor symbol associated to $\{\}$ will be denoted by $[\varnothing]$.
For each $s=0,\ldots,q$, set $F_s$ equal to the free $S$-module with basis $\{[m_{i_1},\ldots,m_{i_s}]:1\leq i_1<\ldots<i_s\leq q\}$ given by the 
${q\choose s}$ Taylor symbols corresponding to subsets of size $s$. That is, $F_s=\bigoplus\limits_{i_1<\ldots<i_s}S[m_{i_1},\ldots,m_{i_s}]$ 
(note that $F_0=S[\varnothing]$). Define
\[f_0:F_0\rightarrow S/M\]
\[s[\varnothing]\mapsto f_0(s[\varnothing])=s\]
For $s=1,\ldots,q$, let $f_s:F_s\rightarrow F_{s-1}$ be given by
\[f_s\left([m_{i_1},\ldots,m_{i_s}]\right)=
 \sum\limits_{j=1}^s\dfrac{(-1)^{j+1}\lcm(m_{i_1},\ldots,m_{i_s})}{\lcm(m_{i_1},\ldots,\widehat{m_{i_j}},\ldots,m_{i_s})}
 [m_{i_1},\ldots,\widehat{m_{i_j}},\ldots,m_{i_s}]\]
 and extended by linearity.
 The \textbf{Taylor resolution} $\mathbb{T}_M$ of $S/M$ is the exact sequence
 \[\mathbb{T}_M:0\rightarrow F_q\xrightarrow{f_q}F_{q-1}\rightarrow\cdots\rightarrow F_1\xrightarrow{f_1}F_0\xrightarrow{f_0} 
 S/M\rightarrow0.\]
 \end{construction}
We define the \textbf{multidegree} of a Taylor symbol $[m_{i_1},\ldots,m_{i_s}]$, denoted $\mdeg[m_{i_1},\ldots,m_{i_s}]$, as follows:
  $\mdeg[m_{i_1},\ldots,m_{i_s}]=\lcm(m_{i_1},\ldots,m_{i_s})$. 

\begin{definition}
Let $M$ be minimally generated by a set of monomials $G$.
\begin{itemize}
\item A monomial $m\in G$ is called \textbf{dominant} (in $G$) if there is a variable $x$, such that for all $m'\in G\setminus\{m\}$, the exponent with 
which $x$ appears in the factorization of $m$ is larger than the exponent with which $x$ appears in the factorization of $m'$.  In this case, we say that $m$ is dominant in $x$, and $x$ is a \textbf{dominant variable} for $m$.
\item $M$ is called a \textbf{dominant ideal} if each element of $G$ is dominant.
\end{itemize}
\end{definition}

 \begin{example}\label{example 1}
  Let $M_1$ and $M_2$ be minimally generated by $G_1=\{a^2,b^3,ab\}$ and $G_2=\{a^2b,ab^3c,bc^2\}$, respectively. Note that $a^2$ and $b^3$ are dominant in $G_1$, but $ab$ is not. Therefore, $M_1$ is not dominant. On the oher hand,  $a^2b$, $ab^3c$, and $bc^2$ are dominant in $G_2$ (note that $a$, $b$, and $c$ are dominant variables for $a^2b$, $ab^3c$, and $bc^2$, respectively). Thus, $M_2$ is a dominant ideal.
\end{example}

The next theorem gives a complete characterization of when the Taylor resolution is minimal [Al].

\begin{theorem}
With the above notation, $\mathbb{T}_M$ is minimal if and only if $M$ is dominant.
\end{theorem}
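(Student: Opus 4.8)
The plan is to reduce everything to the standard criterion that a multigraded free resolution over $S$ is minimal precisely when every entry of every differential matrix lies in the maximal ideal $\mathfrak m=(x_1,\ldots,x_n)$. First I would record what those entries are. By construction, the only possible nonzero entries of $f_s$ are the monomials
\[
\frac{\lcm(m_{i_1},\ldots,m_{i_s})}{\lcm(m_{i_1},\ldots,\widehat{m_{i_j}},\ldots,m_{i_s})},
\]
and such a quotient of lcm's is an honest monomial that is a unit if and only if it equals $1$, i.e.\ if and only if deleting $m_{i_j}$ from $\{m_{i_1},\ldots,m_{i_s}\}$ leaves the lcm unchanged. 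Hence $\mathbb T_M$ is minimal if and only if, for every Taylor symbol $[m_{i_1},\ldots,m_{i_s}]$ and every index $j$, one has $\lcm(m_{i_1},\ldots,\widehat{m_{i_j}},\ldots,m_{i_s})\neq \lcm(m_{i_1},\ldots,m_{i_s})$ (the case $s=1$ being automatic, since the generators are non-constant). Reading lcm's off exponentwise, the equality $\lcm(T)=\lcm(T\setminus\{m\})$ is equivalent to the assertion that for every variable $x$ the exponent of $x$ in $m$ is at most the largest exponent of $x$ occurring among the remaining elements of $T$.

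Next I would establish each implication by contraposition against the dominance condition. For ``minimal $\Rightarrow$ dominant'', assume $M$ is not dominant, so some minimal generator $m$ is dominant in no variable; by definition this means that for every variable $x$ dividing $m$ there is a generator $m_x\in G\setminus\{m\}$ whose exponent in $x$ is at least the exponent of $x$ in $m$. Setting $T=\{m\}\cup\{m_x : x\mid m\}$, every variable dividing $m$ is matched inside $T\setminus\{m\}$ by a generator of no-smaller exponent, while variables not dividing $m$ are irrelevant; hence $\lcm(T\setminus\{m\})=\lcm(T)$, the differential on the Taylor symbol of $T$ acquires a $\pm1$ entry, and $\mathbb T_M$ is not minimal. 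For ``dominant $\Rightarrow$ minimal'', assume $\mathbb T_M$ is not minimal and fix a witnessing pair: a subset $T=\{m_{i_1},\ldots,m_{i_s}\}$ and an element $m=m_{i_j}$ with $\lcm(T\setminus\{m\})=\lcm(T)$. By the exponentwise reformulation above, for every variable $x$ there is an $m'\in T\setminus\{m\}\subseteq G\setminus\{m\}$ with exponent in $x$ at least that of $m$, so $m$ is dominant in no variable and $M$ is not dominant.

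The routine parts are the identification of the differential entries and the exponentwise reading of lcm's. The only real content is the construction in the first implication: from the mere non-dominance of a single generator $m$ one must assemble, variable by variable, a \emph{single} subset $T$ whose lcm is unaffected by deleting $m$, the key being that the several dominating generators $m_x$ (one per variable dividing $m$) can be gathered into one Taylor symbol so that $m$ becomes redundant in the lcm all at once. I expect this simultaneous bookkeeping across all variables to be the main, though modest, obstacle; verifying that the chosen $m_x$ lie in $G\setminus\{m\}$ and that $T\setminus\{m\}$ is nonempty (which holds because $m$ is non-constant) is then immediate, and the remainder is direct computation with the Taylor differential.
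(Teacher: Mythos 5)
Your proof is correct. There is, however, nothing in the paper to compare it against: the paper states this theorem with an empty proof environment, deferring to the reference [Al], so your argument supplies a proof the paper omits. The route you take is the natural (and, as far as this result is concerned, standard) one: reduce minimality of $\mathbb{T}_M$ to the condition that no differential entry $\lcm(T)/\lcm(T\setminus\{m\})$ is a unit, read this exponentwise, and prove both implications by contraposition. The one step with real content—gathering, for a non-dominant generator $m$, one dominating generator $m_x$ per variable $x$ dividing $m$ into a single Taylor symbol $T=\{m\}\cup\{m_x: x\mid m\}$ so that $\lcm(T\setminus\{m\})=\lcm(T)$—is handled correctly, including the needed observations that each $m_x$ lies in $G\setminus\{m\}$, that repetitions among the $m_x$ are harmless since $T$ is a set, and that $T\setminus\{m\}\neq\varnothing$ because $m$ is a non-unit. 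The converse direction is also sound: equality of the two lcm's forces, for every variable, some element of $T\setminus\{m\}\subseteq G\setminus\{m\}$ with exponent at least that of $m$, which is exactly the failure of dominance of $m$ in every variable.
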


\begin{proof}
\end{proof}

The following classical result will be quoted often. Let $\mathbb{F}$ be a free resolution of $S/M$. If $[\sigma_{ij}]$ represents the $j$th basis element of $\mathbb{F}$ in homological degree $i$, define $\degree_{ij} = \deg(\mdeg[\sigma_{ij}])$. The Peskine-Szpiro formula [PS] states the following.

\begin{lemma} \label{Lemma 1}
With the above notation, we have

\[\sum\limits_{i} (-1)^i\sum\limits_{j} \degree_{ij}^k = \begin{cases}
															0 &\text{ for }1\leq k<c\\
															(-1)^c c! \e(S/M) &\text{ for }k=c,
															\end{cases}\]
where $c = \codim(S/M)$.
\end{lemma}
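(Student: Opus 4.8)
The plan is to route everything through the Hilbert series of $S/M$ and its behavior near $t=1$. Since a finite graded free resolution exists (Hilbert's syzygy theorem), and the numbers $\degree_{ij}$ are exactly the degree shifts, I would write $F_i=\bigoplus_j S(-\degree_{ij})$ and use that the Hilbert series is additive along exact sequences, together with $H_S(t)=1/(1-t)^n$, to record
\[
H_{S/M}(t)=\frac{K(t)}{(1-t)^n},\qquad K(t)=\sum_i(-1)^i\sum_j t^{\degree_{ij}}.
\]
The point of this first step is that all the data appearing in the statement is packaged into the single numerator $K(t)$, so the two alternating sums we must control become Taylor data of $K$ at $t=1$.

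Next I would invoke the standard dictionary between $H_{S/M}$ and the invariants $c$ and $\e(S/M)$: the order of the pole of $H_{S/M}$ at $t=1$ equals $\dim(S/M)=n-c$, and after cancelling that pole one has $H_{S/M}(t)=Q(t)/(1-t)^{n-c}$ with $Q(1)=\e(S/M)$. Equivalently $K(t)=(1-t)^c\,Q(t)$ with $Q(1)=\e(S/M)$. Substituting $t=1-u$ and expanding each $(1-u)^{\degree_{ij}}$ by the binomial theorem gives
\[
K(1-u)=\sum_{k\ge 0}(-1)^k\Big(\sum_i(-1)^i\sum_j\binom{\degree_{ij}}{k}\Big)u^k,
\]
whereas the right-hand side $u^c\,Q(1-u)$ vanishes to order $c$ and has $u^c$-coefficient $Q(1)=\e(S/M)$. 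Writing $B_k=\sum_i(-1)^i\sum_j\binom{\degree_{ij}}{k}$ and comparing coefficients yields the binomial form of the lemma, namely $B_k=0$ for $0\le k<c$ (the case $k=0$ being $\rank(S/M)=0$) and $B_c=(-1)^c\,\e(S/M)$.

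Finally I would pass from binomial coefficients to ordinary powers. Since $\binom{x}{0},\dots,\binom{x}{k}$ and $1,x,\dots,x^k$ are two bases of the space of polynomials of degree $\le k$, related by the triangular identity $x^k=k!\binom{x}{k}+\sum_{j<k}a_{kj}\binom{x}{j}$ for suitable integers $a_{kj}$, each power sum $\sum_i(-1)^i\sum_j\degree_{ij}^k$ becomes the fixed combination $k!\,B_k+\sum_{j<k}a_{kj}B_j$. For $1\le k<c$ every $B_j$ with $j\le k$ vanishes, so the power sum is $0$; for $k=c$ only the top term $c!\,B_c$ survives, giving $c!\cdot(-1)^c\,\e(S/M)=(-1)^c c!\,\e(S/M)$, exactly as claimed. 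The only genuinely substantive ingredient is the dimension-theoretic fact that the pole order of $H_{S/M}$ at $t=1$ is precisely $c$ and that the reduced numerator evaluates there to $\e(S/M)$; the two changes of variable ($t\mapsto 1-u$, and the binomial-to-power basis change) are formal bookkeeping. I therefore expect the main obstacle to be cleanly justifying that identification of $c$ and $\e(S/M)$ from the Hilbert series, rather than the combinatorial manipulations.
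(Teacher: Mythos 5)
Your proof is correct, but there is nothing in the paper to compare it against step by step: the paper states Lemma~\ref{Lemma 1} as the classical Peskine--Szpiro formula, cites [PS], and gives no proof, using the result as a black box. Your argument is therefore an independent, self-contained route; it is in fact the standard Hilbert-series proof of this formula. Each step checks out: (a) additivity of Hilbert series along the graded exact complex gives $H_{S/M}(t)=K(t)/(1-t)^n$ with $K(t)=\sum_i(-1)^i\sum_j t^{\degree_{ij}}$, and this uses only exactness and gradedness of $\mathbb{F}$, never minimality --- exactly the generality the paper needs, since the remark after the lemma applies it both to $\mathbb{T}_M$ and to minimal resolutions; (b) the identification of $c=\codim(S/M)$ with the vanishing order of $K$ at $t=1$, and of $\e(S/M)$ with $Q(1)$ where $K(t)=(1-t)^c Q(t)$, is the graded-module definition of multiplicity used in the multiplicity-bounds literature ([HS], [HS1]) that this paper follows, so invoking it is a legitimate citation rather than a gap; (c) comparing coefficients of $u^k$ in $K(1-u)=u^c\,Q(1-u)$ yields $B_k=0$ for $0\le k<c$ and $B_c=(-1)^c\,\e(S/M)$, where $B_k=\sum_i(-1)^i\sum_j\binom{\degree_{ij}}{k}$; and (d) the triangular change of basis $x^k=k!\binom{x}{k}+\sum_{j<k}a_{kj}\binom{x}{j}$ transfers these facts to the power sums, giving $0$ for $1\le k<c$ and $(-1)^c c!\,\e(S/M)$ for $k=c$. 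Two small points to make explicit in a polished write-up: you need $M\neq 0$ (equivalently $c\geq 1$) even for the auxiliary statement $B_0=0$, and the assertion that $Q(t)=K(t)/(1-t)^c$ is a polynomial with $Q(1)\neq 0$ is precisely the pole-order fact you flag as the substantive ingredient, so (b) and (c) should be stated as one lemma with a reference. What your approach buys over the paper's treatment is a proof where it has only a citation, and one that makes transparent why minimality of the resolution is irrelevant, which the paper must separately emphasize in its remark.
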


 \begin{remark} As pointed out in [HS1], The Peskine-Szpiro formula does not require  $\mathbb{F}$ to be minimal. In this article, the Peskine-Szpiro formula will be used in two particular cases; when $\mathbb{F}$=$\mathbb{T}_M$, and when $\mathbb{F}$ is a minimal resolution of $S/M$.
\end{remark}

\section{Multiplicity and codimension 1}

Let $m_1,\ldots,m_r$ be $r$ monomials of the form $m_i = x_1^{\alpha_{i1}}\ldots x_n^{\alpha_{in}}$. For each $i$, let $m_{i,\pol}$ denote the polarization of $m_i$; that is,
\[m_{i,\pol} = x_{11} \ldots x_{1 \alpha_{i1}} \ldots x_{n1} \ldots x_{n \alpha_{in}}.\]
Also, for each $i$, let $A_i = \{x_{11}, \ldots, x_{1 \alpha_{i1}}, \ldots, x_{n1}, \ldots, x_{n \alpha_{in}}\}$. The sets $A_i$ are said to be \textbf{associated} to the monomials $m_i$.  If $m_1,\ldots,m_r$ are the minimal generators of a monomial ideal $M$, then the Venn diagram displaying the sets $A_1,\ldots,A_r$ will be called the \textbf{diagram} of $M$ (Example \ref{Example stem-and-leaf} shows the diagram of an ideal $M$).
The next lemma states basic facts about sets and $\lcm$'s.

\begin{lemma} \label{Lemma 2}
Let $A_1, \ldots,A_r$ be the sets associated to $r$ monomials $m_1, \ldots,m_r$. Then
\begin{enumerate}[(i)]
\item $\deg(\lcm(m_1,\ldots,m_r)) = \# \left( \bigcup\limits_{i=1}^r A_i\right)$.
\item $\deg(\gcd(m_1,\ldots,m_r)) = \# \left(\bigcap\limits_{i=1}^r A_i\right)$.
\item $\deg \left( \dfrac{\lcm(m_1,\ldots,m_r)}{\lcm(m_2,\ldots,m_r)}\right) =\# \left( A_1 \setminus \bigcup\limits_{i=2}^r A_i\right)$.
\end{enumerate}
\end{lemma}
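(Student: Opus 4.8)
The plan is to reduce all three statements to a single membership criterion for the polarized variables and then count. The crucial observation is that the $k$-th copy of $x_j$, namely $x_{jk}$, belongs to $A_i$ if and only if $x_j^k$ divides $m_i$, that is, if and only if $\alpha_{ij}\geq k$. This is immediate from the definition $A_i=\{x_{11},\ldots,x_{1\alpha_{i1}},\ldots,x_{n1},\ldots,x_{n\alpha_{in}}\}$, since the copies of $x_j$ occurring in $A_i$ are exactly $x_{j1},\ldots,x_{j\alpha_{ij}}$. I would record this equivalence first, as every part rests on it.

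For part (i), I would note that the exponent of $x_j$ in $\lcm(m_1,\ldots,m_r)$ is $\max_i\alpha_{ij}$, so that $\deg(\lcm(m_1,\ldots,m_r))=\sum_{j=1}^n\max_i\alpha_{ij}$. On the set side, $x_{jk}\in\bigcup_i A_i$ if and only if $\alpha_{ij}\geq k$ for some $i$, i.e. $\max_i\alpha_{ij}\geq k$; hence for each fixed $j$ the copies of $x_j$ lying in $\bigcup_i A_i$ are precisely $x_{j1},\ldots,x_{j\max_i\alpha_{ij}}$, of which there are $\max_i\alpha_{ij}$. Summing over $j$ gives $\#(\bigcup_i A_i)=\sum_j\max_i\alpha_{ij}$, matching the degree. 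Part (ii) is identical with $\max$ replaced by $\min$ and union by intersection: the exponent of $x_j$ in $\gcd(m_1,\ldots,m_r)$ is $\min_i\alpha_{ij}$, and $x_{jk}\in\bigcap_i A_i$ iff $\alpha_{ij}\geq k$ for all $i$, iff $\min_i\alpha_{ij}\geq k$.

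For part (iii), rather than compute the exponents of the quotient directly, I would reduce to part (i). Since $\lcm(m_2,\ldots,m_r)$ divides $\lcm(m_1,\ldots,m_r)$, the quotient is a genuine monomial, and its degree is the difference $\deg(\lcm(m_1,\ldots,m_r))-\deg(\lcm(m_2,\ldots,m_r))$. Applying part (i) to each lcm, this equals $\#(\bigcup_{i=1}^r A_i)-\#(\bigcup_{i=2}^r A_i)$. Writing $\bigcup_{i=1}^r A_i=(A_1\setminus\bigcup_{i=2}^r A_i)\sqcup\bigcup_{i=2}^r A_i$ as a disjoint union, the two cardinalities cancel and leave $\#(A_1\setminus\bigcup_{i=2}^r A_i)$, as claimed.

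There is no serious obstacle here; the content is bookkeeping. The only point demanding care is the indexing in the membership criterion — getting the inequality $\alpha_{ij}\geq k$ exactly right rather than a strict one — since the counts in (i) and (ii), and hence the cancellation in (iii), all hinge on it. Once that equivalence is stated cleanly, the three identities follow at once.
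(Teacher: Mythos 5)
Your proof is correct and follows essentially the same route as the paper's: both compute the exponent of each $x_j$ in the lcm (resp.\ gcd) as a maximum (resp.\ minimum), count the copies $x_{j1},\ldots,x_{jk}$ present in the union (resp.\ intersection) of the $A_i$, and derive (iii) from (i) by subtracting degrees and splitting $\bigcup_{i=1}^r A_i$ into $\left(A_1\setminus\bigcup_{i=2}^r A_i\right)$ and $\bigcup_{i=2}^r A_i$. Your explicit membership criterion $x_{jk}\in A_i \iff \alpha_{ij}\geq k$ is just a cleaner packaging of the set computation the paper writes out directly.
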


\begin{proof}
(i) Note that \[\lcm(m_1,\ldots,m_r) = \lcm(x_1^{\alpha_{11}} \ldots x_n^{\alpha_{1n}},\ldots, x_1^{\alpha_{r1}} \ldots x_n^{\alpha_{rn}}) = x_1^{\Max(\alpha_{11}, \ldots,\alpha_{r1})} \ldots x_n ^{\Max(\alpha_{1n},\ldots,\alpha_{rn})}\]
Hence,
\[\deg(\lcm(m_1,\ldots,m_r)) = \Max(\alpha_{11},\ldots,\alpha_{r1}) + \ldots + \Max(\alpha_{1n},\ldots,\alpha_{rn}).\]
Likewise, 
\begin{align*}
\bigcup\limits_{i=1}^r A_i & = \bigcup\limits_{i=1}^r \{x_{11},\ldots,x_{1 \alpha_{i1}},\ldots, x_{n1},\ldots, x_{n \alpha_{in}}\}\\
& = \bigcup\limits_{j=1}^n \left( \bigcup\limits_{i=1}^r \{x_{j1},\ldots,x_{j \alpha_{ij}}\}\right) \\
&= \bigcup\limits_{j=1}^n \{x_{j1},\ldots, x_{j\Max(\alpha_{1j},\ldots,\alpha_{rj})}\}\\
& =\left\{x_{11},\ldots,x_{1 \Max( \alpha_{11},\ldots,\alpha_{r1})} , \ldots , x_{n1},\ldots, x_{n \Max(\alpha_{1n},\ldots,\alpha_{rn})}\right\}.
\end{align*}
Therefore,
\[\# \left( \bigcup\limits_{i=1}^r A_i\right) = \Max(\alpha_{11},\ldots,\alpha_{r1}) + \ldots + \Max(\alpha_{1n},\ldots, \alpha_{rn}),\]
 and thus, $\deg(\lcm(m_1,\ldots,m_r)) = \# \left( \bigcup\limits_{i=1}^r A_i\right)$.\\
(ii) If in the proof of part (i) we change $\lcm$ for $\gcd$, $\Max$ for $\min$, and $\bigcup\limits_{i=1}^r$ for $\bigcap\limits_{i=1}^r$, we obtain that 
\[\deg(\gcd(m_1,\ldots,m_r)) = \#\left(\bigcap\limits_{i=1}^r A_i\right).\]
(iii) Finally, using part (i) we obtain 
\begin{align*}
\deg\left(\dfrac{\lcm(m_1,\ldots,m_r)}{\lcm(m_2,\ldots,m_r)}\right) & = \deg(\lcm(m_1,\ldots,m_r)) - \deg(\lcm(m_2,\ldots,m_r)) \\
&= \#\left(\bigcup\limits_{i=1}^r A_i\right) - \#\left(\bigcup\limits_{i=2}^r A_i\right) \\
&= \#\left(\bigcup\limits_{i=1}^r A_i \setminus \bigcup\limits_{i=2}^r A_i \right)\\
& = \# \left(A_1 \setminus \bigcup\limits_{i=2}^r A_i\right).
\end{align*}
\end{proof}

\textit{Notation}: Let $M=(m_1,\ldots,m_q)$. For all $i=1,\ldots,q-1$, let
 \[\mathscr{D}_i = \left\{\{m_{r_1},\ldots,m_{r_i}\}: 2\leq r_1<\ldots<r_i \leq q\right\}.\] Also, let $\mathscr{D}_0 = \mathscr{D}_q = \{\varnothing\}$. If $D=\{m_{r_1},\ldots,m_{r_i}\} \in \mathscr{D}_i$, then we make the following conventions:
\[\lcm(D) = \lcm(m_{r_1},\ldots,m_{r_i}); \quad \lcm(m_1,D) = \lcm(m_1,m_{r_1},\ldots,m_{r_i}).\]
In particular, if $D=\varnothing$, then $\lcm(D)=1$, and $\lcm(m_1,D) = \lcm(m_1) = m_1$.

\begin{theorem}\label{Theorem 3.2}
Let $M=(m_1,\ldots,m_q)$.
\begin{enumerate}[(i)]
\item  If $\codim(S/M) = 1$, then $\e(S/M)= \deg(\gcd(m_1,\ldots,m_q))$.
 \item  If $\codim(S/M) = q$, then $\e(S/M) = \prod\limits_{i=1}^q \deg(m_i)$.
\end{enumerate}
\end{theorem}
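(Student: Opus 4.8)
The plan is to feed the Taylor resolution $\mathbb{T}_M$ into the Peskine--Szpiro formula (Lemma \ref{Lemma 1}) and then convert the resulting sums of $\lcm$-degrees into set-theoretic data via Lemma \ref{Lemma 2}. Recall that in homological degree $i$ the module $F_i$ has one basis element $[m_{i_1},\ldots,m_{i_i}]$ for each $i$-subset $T=\{m_{i_1},\ldots,m_{i_i}\}$ of the generators, and that $\degree$ of this symbol is $\deg(\mdeg[m_{i_1},\ldots,m_{i_i}])=\deg(\lcm(T))$. Thus for every exponent $k$,
\[\sum_i(-1)^i\sum_j\degree_{ij}^k=\sum_{T\subseteq\{m_1,\ldots,m_q\}}(-1)^{|T|}\bigl(\deg(\lcm(T))\bigr)^k,\]
where the empty set contributes $\deg(\lcm(\varnothing))^k=0$ for $k\geq1$. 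Both parts of the theorem amount to evaluating this alternating sum for the relevant value of $k$.

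For part (i) we have $c=1$, so taking $k=c=1$ in Lemma \ref{Lemma 1} yields $\e(S/M)=\sum_{T\neq\varnothing}(-1)^{|T|+1}\deg(\lcm(T))$. I would split the subsets $T$ according to whether they contain $m_1$: writing $T=D$ or $T=\{m_1\}\cup D$ with $D\subseteq\{m_2,\ldots,m_q\}$ (this is exactly what the sets $\mathscr{D}_i$ are built to index) pairs up the terms so that
\[\e(S/M)=\sum_{i=0}^{q-1}(-1)^i\sum_{D\in\mathscr{D}_i}\bigl(\deg(\lcm(m_1,D))-\deg(\lcm(D))\bigr).\]
By Lemma \ref{Lemma 2}(iii) each summand equals $\#\bigl(A_1\setminus\bigcup_{m\in D}A_m\bigr)$, and the remaining task is the inclusion--exclusion identity
\[\sum_{D\subseteq\{m_2,\ldots,m_q\}}(-1)^{|D|}\,\#\Bigl(A_1\setminus\bigcup_{m\in D}A_m\Bigr)=\#\Bigl(\bigcap_{i=1}^q A_i\Bigr),\]
after which Lemma \ref{Lemma 2}(ii) identifies the right-hand side with $\deg(\gcd(m_1,\ldots,m_q))$.

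For part (ii), codimension $q$ together with $q$ generators forces $M$ to be a complete intersection, which for a monomial ideal means the $m_i$ are pairwise coprime; hence $\lcm(T)=\prod_{m\in T}m$ and $\deg(\lcm(T))=\sum_{m_i\in T}\deg(m_i)$. Now I would take $k=c=q$ in Lemma \ref{Lemma 1}, obtaining
\[(-1)^q q!\,\e(S/M)=\sum_{T\subseteq\{m_1,\ldots,m_q\}}(-1)^{|T|}\Bigl(\sum_{m_i\in T}\deg(m_i)\Bigr)^{q}.\]
The conclusion then follows from the finite-difference identity $\sum_{T\subseteq\{1,\ldots,q\}}(-1)^{|T|}\bigl(\sum_{i\in T}d_i\bigr)^{q}=(-1)^q q!\prod_{i=1}^q d_i$ applied with $d_i=\deg(m_i)$.

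I expect the two combinatorial identities to be the real content. For (i), the cleanest route to the inclusion--exclusion identity is to fix an element $x\in A_1$, let $S_x=\{m\in\{m_2,\ldots,m_q\}:x\in A_m\}$, and observe that $x$ contributes $\sum_{D\cap S_x=\varnothing}(-1)^{|D|}=(1-1)^{(q-1)-|S_x|}$, which vanishes unless $S_x=\{m_2,\ldots,m_q\}$, i.e.\ unless $x\in\bigcap_{i=1}^q A_i$. For (ii), the identity is the statement that the $q$-th iterated finite difference of the degree-$q$ polynomial $x^q$ equals $q!$ times its leading coefficient; a slick proof reads off the coefficient of $t^q$ in $\prod_{i=1}^q(1-e^{t d_i})=(-1)^q t^q\prod_i d_i+O(t^{q+1})$. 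A minor preliminary point worth verifying for (ii) is the claim that a monomial complete intersection has pairwise coprime generators, which is what licenses the passage from $\lcm$ to product.
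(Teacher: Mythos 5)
Your proposal is correct, and for part (i) it is essentially the paper's own argument: the paper likewise feeds the Taylor resolution into the Peskine--Szpiro formula with $k=c=1$, splits the Taylor symbols according to whether they contain $m_1$ (this is exactly what the sets $\mathscr{D}_i$ index), telescopes to the sum of $\deg\bigl(\lcm(m_1,D)/\lcm(D)\bigr)$, and then invokes Lemma \ref{Lemma 2}(iii) followed by inclusion--exclusion and Lemma \ref{Lemma 2}(ii). Your element-by-element counting argument (fixing $x\in A_1$ and summing $(1-1)^{(q-1)-|S_x|}$) is just a direct proof of the inclusion--exclusion step the paper applies to the sets $A_1\setminus A_i$, so the content is the same. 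For part (ii) you diverge slightly, to your advantage in self-containedness: the paper stops after observing that $\codim = q$ with $q$ generators forces pairwise coprimality, hence a complete intersection, and then simply quotes the classical fact that the multiplicity of a complete intersection is the product of the degrees of its generators; you instead derive that fact on the spot, applying Peskine--Szpiro to the Taylor (here Koszul) resolution and evaluating the alternating sum via the finite-difference identity $\sum_{T}(-1)^{|T|}\bigl(\sum_{i\in T}d_i\bigr)^q=(-1)^q q!\prod_i d_i$, proved by reading off the $t^q$ coefficient of $\prod_i(1-e^{td_i})$. Both routes rest on the same unproved-in-detail observation (shared generators would yield a variable cover of size $<q$, contradicting $\codim=q$), which you appropriately flag; your computation has the side benefit of also exhibiting the vanishing of the sums for $k<q$ that Lemma \ref{Lemma 1} predicts.
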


\begin{proof}
(i) By Lemma \ref{Lemma 1}, 
\begin{align*}
\e(S/M)& = \sum\limits_{i=1}^q (-1)^{i-1} \sum\limits_{j=1}^{q\choose i} \degree_{ij}\\
&= \sum\limits_{i=1}^q (-1)^{i-1} \sum_{\substack{[\sigma]\in \mathbb{T}_M \\ \hdeg[\sigma]=i}} \deg(\mdeg[\sigma])\\
& =\sum\limits_{i=1}^q (-1)^{i-1} \left[\sum\limits_{D\in \mathscr{D}_{i-1}} \deg(\lcm(m_1,D)) + \sum\limits_{D \in \mathscr{D}_i} \deg(\lcm(D))\right] \\
& = \sum\limits_{i=1}^ q (-1)^{i-1} \sum \limits_{D \in \mathscr{D}_{i-1}} \deg(\lcm(m_1,D)) - \sum\limits_{i=1}^q (-1)^i \sum\limits_{D \in \mathscr{D}_i} \deg(\lcm(D)) \\
&= \sum\limits_{i=1}^q (-1)^{i-1} \sum\limits_{D \in \mathscr{D}_{i-1}} \deg(\lcm(m_1,D)) - \sum\limits_{i=0}^{q-1} (-1)^i \sum\limits_{D \in \mathscr{D}_i} \deg(\lcm(D)) \\
&=\sum\limits_{i=1}^ q (-1)^{i-1} \sum\limits_{D \in \mathscr{D}_{i-1}} \deg(\lcm(m_1,D)) - \sum\limits_{i=1}^ q (-1)^{i-1} \sum\limits_{D \in \mathscr{D}_{i-1}} \deg(\lcm(D)) \\
& = \sum\limits_{i=1}^q (-1)^{i-1} \sum\limits_{D \in \mathscr{D}_{i-1}} \left[\deg(\lcm(m_1,D)) - \deg(\lcm(D))\right] \\
& = \sum\limits_{i=1}^q (-1)^{i-1} \sum\limits_{D\in \mathscr{D}_{i-1}} \deg \left(\dfrac{\lcm(m_1,D)}{\lcm(D)}\right) \\
&= \deg(m_1) + \sum\limits_{i=2}^q (-1)^{i-1} \sum\limits_{D \in \mathscr{D}_{i-1}} \deg \left(\dfrac{\lcm(m_1,D)}{\lcm(D)}\right) \\
&= \deg(m_1) + \sum\limits_{i=1}^{q-1} (-1)^ i \sum\limits_{D \in \mathscr{D}_i} \deg \left(\dfrac{\lcm(m_1,D)}{\lcm(D)}\right)\\
& = \deg (m_1) + \sum\limits_{i=1}^{q-1} (-1)^i \sum\limits_{2\leq r_1<\ldots < r_i \leq q} \deg\left(\dfrac{\lcm(m_1,m_{r_1},\ldots,m_{r_i})}{\lcm(m_{r_1},\ldots,m_{r_i})}\right).
\end{align*}

If we apply Lemma \ref{Lemma 2} (iii) to this last expression, we get
\begin{align*}
\e(S/M)& =\deg(m_1) + \sum\limits_{i=1}^{q-1} (-1)^i \sum\limits_{2\leq r_1<\ldots<r_i\leq q} \#\left(A_1\setminus \bigcup\limits_{k=1}^i A_{r_k} \right)\\
&= \deg(m_1) + \sum\limits_{i=1}^{q-1} (-1)^i \sum\limits_{2\leq r_1<\ldots<r_i\leq q} \#\left(\bigcap\limits_{k=1}^i (A_1\setminus A_{r_k})\right)\\
&= \deg(m_1) - \sum\limits_{i=1}^{q-1} (-1)^{i-1} \sum\limits_{2\leq r_1<\ldots<r_i\leq q} \#\left(\bigcap\limits_{k=1}^i (A_1\setminus A_{r_k})\right)
\end{align*}
Applying the principle of inclusion-exclusion, we get
\begin{align*}
\e(S/M) &= \deg(m_1) - \#\left(\bigcup\limits_{i=2}^q (A_1\setminus A_i)\right)\\
&= \deg(m_1) - \# \left(A_1 \setminus \bigcap\limits_{i=2}^q A_i\right) \\
&= \deg(m_1) - \# \left(A_1 \setminus \bigcap\limits_{i=1}^q A_i\right)\\
&= \deg(m_1) - \left[\# (A_1) - \# \left(\bigcap\limits_{i=1}^q A_i\right)\right].
\end{align*}
Finally, by Lemma \ref{Lemma 2} (ii),
\[\e(S/M) = \deg(m_1) - \left[\deg(m_1) - \deg(\gcd(m_1,\ldots,m_q))\right] = \deg(\gcd(m_1,\ldots,m_q)).\]
(ii) Since $M$ is minimally generated by $q$ monomials, and $\codim(S/M) = q$, no pair of minimal generators can be divided by a common variable. This implies that $M$ is a complete intersection, and the result holds.
\end{proof}

\begin{remark}: Theorem \ref{Theorem 3.2} (ii) simply paraphrases the well-known fact that the multiplicity of a complete intersection is the product of the degrees of its minimal generators. Including this result in Theorem \ref{Theorem 3.2} allows us to compare the multiplicities of monomial ideals in two opposite scenarios: when the codimension is minimal or maximal. The formulas given by Theorem \ref{Theorem 3.2} inspired the formula in Theorem \ref{Theorem stem-and-leaf}, where we give the multiplicities of some monomial ideals with intermediate codimension.
\end{remark}

\section{Multiplicity and stem ideals}

Suppose that $m_1$ is a dominant generator of $M = (m_1,\ldots,m_q)$. According to the third structural decomposition [Al1, Definition 4.7],
\[\betti_{k,l}(S/M) = \betti_{k,l}(S/M_1) + \betti_{k-1,\frac{l}{m_1}} (S/M_{m_1}),\]
where $M_1 = (m_2,\ldots,m_q)$, and $M_{m_1} = \left(\dfrac{\lcm(m_1,m_2)}{m_1},\ldots,\dfrac{\lcm(m_1,m_q)}{m_1}\right).$

\begin{lemma}\label{Lemma 3rd sd} 
Suppose that $m_1$ is a dominant generator of $M = (m_1,\ldots,m_q)$. Let $M_1$ and $M_{m_1}$ be the ideals determined by the third structural decomposition of $M$.
\begin{enumerate}[(i)]
\item If $\codim(S/M) = \codim(S/M_1) = \codim(S/M_{m_1})$, then 
\[\e(S/M) = \e(S/M_1) - \e(S/M_{m_1}).\]
\item If $\codim(S/M) = \codim(S/M_1) < \codim(S/M_{m_1})$, then
\[\e(S/M) = \e(S/M_1).\]
\end{enumerate}
\end{lemma}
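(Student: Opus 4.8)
The plan is to apply the Peskine--Szpiro formula (Lemma \ref{Lemma 1}) to the \emph{minimal} free resolutions of $S/M$, $S/M_1$, and $S/M_{m_1}$, which is permitted by the Remark following Lemma \ref{Lemma 1}, and then to feed the third structural decomposition of the Betti numbers into the resulting alternating sums. For a proper monomial ideal $N$ and an integer $k\ge 0$, I introduce the abbreviation
\[P_N(k)=\sum_{i}(-1)^i\sum_{l}\betti_{i,l}(S/N)\,(\deg l)^k,\]
where $i$ runs over homological degrees and $l$ over multidegrees. Grouping the basis elements of the minimal resolution by multidegree gives $P_N(k)=\sum_i(-1)^i\sum_j\degree_{ij}^k$, so Lemma \ref{Lemma 1} tells us that $P_N(k)=0$ for $1\le k<\codim(S/N)$ and that $P_N(c')=(-1)^{c'}c'!\,\e(S/N)$ with $c'=\codim(S/N)$. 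I would also note that $P_N(0)=0$: it is the alternating sum of the ranks in a free resolution of the torsion module $S/N$, hence equals $\rank_S(S/N)=0$.

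Next I would substitute $\betti_{i,l}(S/M)=\betti_{i,l}(S/M_1)+\betti_{i-1,l/m_1}(S/M_{m_1})$ into $P_M(k)$. The first family of terms reassembles verbatim into $P_{M_1}(k)$. In the second family, $\betti_{i-1,l/m_1}(S/M_{m_1})$ is nonzero only when $m_1\mid l$, so I reindex by $l=m_1 l'$ and $i'=i-1$, using $\deg l=\deg(m_1)+\deg(l')$ and $(-1)^i=-(-1)^{i'}$. Expanding $(\deg(m_1)+\deg(l'))^k$ by the binomial theorem then turns this single shifted sum into a combination of the lower Peskine--Szpiro sums of $M_{m_1}$, yielding the master identity
\[P_M(k)=P_{M_1}(k)-\sum_{t=0}^{k}\binom{k}{t}\deg(m_1)^{k-t}\,P_{M_{m_1}}(t).\]

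To conclude, I set $k=c:=\codim(S/M)$ and read off the vanishing of the relevant terms. In case (i), where $c=\codim(S/M_1)=\codim(S/M_{m_1})$, every $P_{M_{m_1}}(t)$ with $0\le t<c$ vanishes, so only $t=c$ survives and the identity reduces to $P_M(c)=P_{M_1}(c)-P_{M_{m_1}}(c)$; dividing by $(-1)^c c!$ gives $\e(S/M)=\e(S/M_1)-\e(S/M_{m_1})$. In case (ii), where $c<\codim(S/M_{m_1})$, now every $P_{M_{m_1}}(t)$ with $0\le t\le c$ vanishes, the whole sum drops out, and $P_M(c)=P_{M_1}(c)$ gives $\e(S/M)=\e(S/M_1)$ since $\codim(S/M_1)=c$ as well.

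The main obstacle I anticipate is bookkeeping rather than conceptual: getting the double reindexing of the second family right (the multidegree shift $l=m_1 l'$ together with the homological shift and its sign), and recognizing that it is precisely the binomial expansion that converts a single shifted sum into the combination $\sum_t\binom{k}{t}\deg(m_1)^{k-t}P_{M_{m_1}}(t)$. Once the master identity is established, both cases follow purely from the vanishing ranges provided by Lemma \ref{Lemma 1}; the only point to keep in view is that $P_{M_{m_1}}(0)=0$ must be invoked to kill the $t=0$ term in each case.
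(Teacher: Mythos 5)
Your proposal is correct and follows essentially the same route as the paper: both substitute the third structural decomposition into the Peskine--Szpiro alternating sums for minimal resolutions, handle the homological and multidegree shift by $m_1$ via a binomial expansion, and then let the vanishing of the lower-order sums (including the $k=0$ rank count) decide the two cases. Your $P_N(k)$ notation and the ``master identity'' merely repackage, in a cleaner form, the computation the paper carries out directly at $k=c$.
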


\begin{proof}
Suppose that $\codim(S/M) = \codim(S/M_1) = c$. If we denote by $[\theta_{i,j}]$ (respectively, $[\sigma_{i,j}]$, and $[\tau_{i,j}]$) the $j^{th}$ basis element in homological degree $i$ of the minimal resolution of $S/M$ (respectively, $S/M_1$, and $S/M_{m_1}$) then, by the third structural decomposition,
\[\sum\limits_{j=1}^{\betti_i(S/M)} \left(\deg[\theta_{i,j}]\right)^c = \sum\limits_{j=1}^{\betti_i (S/M_1)} \left(\deg[\sigma_{i,j}]\right)^c + \sum\limits_{j=1}^{\betti_{i-1}(S/M_{m_1})} \left(\deg(m_1) + \deg[\tau_{i-1,j}]\right)^c\]
By the Peskine-Szpiro formula,

\begin{align*}
&\e(S/M) = \dfrac{1}{c!} \sum\limits_{i=1}^{\pd(S/M)} (-1)^{i+c}\sum\limits_{j=1}^{\betti_i(S/M)} (\degree_{ij})^c\\
&=  \dfrac{1}{c!} \sum\limits_{i=1}^{\pd(S/M)} (-1)^{i+c}\sum\limits_{j=1}^{\betti_i(S/M)} \left(\deg[\theta_{i,j}]\right)^c\\
&=  \dfrac{1}{c!} \sum\limits_{i=1}^{\pd(S/M)} (-1)^{i+c} \left[\sum\limits_{j=1}^{\betti_i(S/M_1)} \left(\deg[\sigma_{i,j}]\right)^c + \sum\limits_{j=1}^{\betti_{i-1}(S/M_{m_1})} \left(\deg(m_1) + \deg[\tau_{i-1,j}]\right)^c\right]\\
&= \dfrac{1}{c!} \sum\limits_{i=1}^{\pd(S/M_1)} (-1)^{i+c} \sum\limits_{j=1}^{\betti_i(S/M_1)} \left(\deg[\sigma_{i,j}]\right)^c + \dfrac{1}{c!} \sum\limits_{i=0}^{\pd(S/M)-1} (-1)^{i+1+c} \sum\limits_{j=1}^{\betti_i(S/M_{m_1})} \left(\deg(m_1) + \deg[\tau_{i,j}]\right)^c\\
&= \e(S/M_1) - \dfrac{1}{c!} \sum\limits_{i=0}^{\pd(S/M) - 1} (-1)^{i+c} \sum\limits_{j=1}^{\betti_i(S/M_{m_1})} \left(\deg(m_1) + \deg[\tau_{i,j}]\right)^c\\
&= \e(S/M_1) - \left[\dfrac{(-1)^c}{c!} \left(\deg(m_1)\right)^c + \sum\limits_{i=1}^{\pd(S/M_{m_1})} \dfrac{(-1)^{i+c}}{c!} \sum\limits_{j=1}^{\betti_i(S/M_{m_1})} \left(\deg(m_1) + \deg[\tau_{i,j}]\right)^c\right]\\
&= \e(S/M_1)- \dfrac{(-1)^c}{c!} \left(\deg(m_1)\right)^c - \sum\limits_{i=1}^{\pd(S/M_{m_1})} \dfrac{(-1)^{i+c}}{c!} \sum\limits_{j=1}^{\betti_i(S/M_{m_1})} \sum\limits_{k=0}^c {c\choose k}\left(\deg(m_1)\right)^{c-k} \left(\deg[\tau_{i,j}]\right)^k
\end{align*}
\begin{align*}
&=  \e(S/M_1)- \dfrac{(-1)^c}{c!} \left(\deg(m_1)\right)^c - \sum\limits_{k=0}^c {c\choose k}\left(\deg(m_1)\right)^{c-k} \left[\sum\limits_{i=1}^{\pd(S/M_{m_1})} \dfrac{(-1)^{i+c}}{c!} \sum\limits_{j=1}^{\betti_i(S/M_{m_1})} \left(\deg[\tau_{i,j}]\right)^k\right]\\
&= \e(S/M_1) - \dfrac{(-1)^c}{c!} (\deg(m_1))^ c + \dfrac{(-1)^c}{c!} (\deg(m_1))^c - \sum\limits_{i=1}^{\pd(S/M_{m_1})} \dfrac{(-1)^{i+c}}{c!} \sum\limits_{j=1}^{\betti_i(S/M_{m_1})} \left(\deg[\tau_{i,j}]\right)^c
\end{align*}

Therefore,\[\e(S/M) = \e(S/M_1) - \sum\limits_{i=1}^{\pd(S/M_{m_1})} \dfrac{(-1)^{i+c}}{c!} \sum\limits_{j=1}^{\betti_i(S/M_{m_1})} \left(\deg[\tau_{i,j}]\right)^c.\]
(i) Since $\codim(S/M_{m_1}) = c$,
\[\e(S/M_{m_1}) = \sum\limits_{i=1}^{\pd(S/M_{m_1})} \dfrac{(-1)^{i+c}}{c!} \sum\limits_{j=1}^{\betti_i(S/M_{m_1})} \left(\deg[\tau_{i,j}]\right)^c.\]
Hence,
\[\e(S/M) = \e(S/M_1) - \e(S/M_{m_1}).\]
(ii) Since $\codim(S/M_{m_1})\geq c+1$, 
\[\sum\limits_{i=1}^{\pd(S/M_{m_1})} \dfrac{(-1)^{i+c}}{c!} \sum\limits_{j=1}^{\betti_i(S/M_{m_1})} \left(\deg[\tau_{i,j}]\right)^c =0.\]
Hence,
\[\e(S/M) = \e(S/M_1).\]
\end{proof}

Next, we will introduce the class of stem ideals, and will give an explicit description of their multiplicities. The interesting fact about stem ideals is that they extend the class of dominant ideals of codimension 1, and that of complete intersections; and the formula for their multiplicities generalizes the statement of Theorem \ref{Theorem 3.2}.

\begin{definition} \label{Definition stem-and-leaf} 
A dominant ideal $M = (m_1,\ldots,m_q)$ will be called a \textbf{stem ideal} if, after reordering the minimal generators, there are integers $0=i_0<i_1<\ldots<i_c =q$, and monomials $l_1,\ldots,l_c$ of positive degree with the following properties:
\begin{enumerate}[(i)]
\item For each $1\leq t\leq c$, $\gcd(m_{i_{t-1}+1},m_{i_{t-1}+2},\ldots,m_{i_t}) = l_t$.
\item If $1\leq r<s\leq q$, and for some $t$, $r\leq i_t<s$, then $\gcd(m_r,m_s) = 1$.
\end{enumerate}
The monomials $l_{1},\ldots,l_{c}$ will be called \textbf{stems} of $M$. (Note that $\codim(S/M)=c$.)
\end{definition}

\begin{example} \label{Example stem-and-leaf} 
Let $M=(m_1=a^2bc, m_2=b^3c, m_3=c^4, m_4=d^2e^2, m_5=def, m_6=dg^2)$. We will show that $M$ is a stem ideal. Let $i_0=0$; $i_1=3$; $i_2=6$, and let $l_1=c; l_2=d$. Then
\begin{enumerate}[(i)]
\item $\gcd(m_1,m_2,m_3)=c=l_1$, and $\gcd(m_4,m_5,m_6) =d=l_2$.
\item Let $1\leq r \leq i_1=3< s \leq 6$. Since the only variables that appear in the factorization of $m_r$ are in  $\{a,b,c\}$, and the only variables that appear in the factorization of $m_s$ are in $\{d,e,f,g\}$, it follows that 
$\lcm(m_r,m_s) = 1$.
\end{enumerate} 
We have proven that $M$ is a stem ideal, with stems $l_1=c$, and $l_2=d$. The diagram of $M$ is\\

\begin{tikzpicture}
\draw (0,2) circle (1.4cm) ;
\draw (0,2.5)node{$b_2$};
\draw(1,2) node{$b_3$}; \draw (1.4,3) node {$A_2$};
\draw (-2,2) circle (1.4cm); \draw (-3.4,3) node {$A_1$};
\draw(-2.5,2.5) node{$a_1$};
\draw (-3,2) node {$a_2$};
\draw (-1,2.5) node {$b_1$};
\draw (-1,1.5) node {$c_1$};
\draw (-1,0.5) circle (1.4cm); \draw (0.1,-0.8) node {$A_3$};
\draw (-2,0) node {$c_2$};
\draw(0,0) node {$c_3$};
\draw (-1,-0.5) node {$c_4$};

\draw (8,2) circle (1.4cm) ;
\draw (8,2.5)node{$f_1$};
\draw(9.4,3) node{$A_5$};
\draw (6,2) circle (1.4cm); 
\draw(5.5,2.5) node{$g_1$};
\draw (5,2) node {$g_2$};
\draw (4.6,3) node {$A_6$};
\draw (7,1.5) node {$d_1$};
\draw (7,0.5) circle (1.4cm); \draw (8.1,-0.8) node {$A_4$};
\draw (6,0) node {$d_2$};
\draw(8,0) node {$e_2$};
\draw (8,1) node {$e_1$};

\end{tikzpicture}

Notice that the stems of $M$ can be read off of the diagram of $M$. All we need to do is identify the sets $A_1\cap A_2\cap A_3 = \{c_1\}$ and $A_4\cap A_5\cap A_6 = \{d_1\}$ with the monomials $\gcd(m_1,m_2,m_3)=c$ and $\gcd(m_4,m_5,m_6)=d$, respectively.
\end{example}

\begin{lemma} \label{Lemma stem-and-leaf} 
Let $M$ be a stem ideal. Suppose that, with the notation of Definition \ref{Definition stem-and-leaf}, $i_1-i_0\geq i_2-i_1\geq \ldots \geq i_c-i_{c-1} \geq 1$. Then
\begin{enumerate}[(a)]
\item If $i_1\geq 2$, and $\dfrac{\gcd(m_2,\ldots,m_{i_1})}{\gcd(m_1,m_2,\ldots,m_{i_1})} = 1$, then $\codim(S/M_{m_1}) \geq c+1$.
\item If $i_1\geq 2$, and $\dfrac{\gcd(m_2,\ldots,m_{i_1})}{\gcd(m_1,m_2,\ldots,m_{i_1})} \neq 1$, then $\codim(S/M_{m_1}) = c$.
\end{enumerate}
\end{lemma}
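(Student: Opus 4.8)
The plan is to compute $\codim(S/M_{m_1})$ directly from the minimal primes of $M_{m_1}$: for a monomial ideal this codimension is the minimal size of a set $T$ of variables such that every minimal generator is divisible by some variable in $T$. First I would identify the generators of $M_{m_1}$ using the stem structure. By Definition \ref{Definition stem-and-leaf}(ii), for each $j$ with $i_1 < j \leq q$ the generators $m_1$ and $m_j$ lie in different blocks, so $\gcd(m_1,m_j)=1$ and hence $\lcm(m_1,m_j)/m_1 = m_j$. For $2 \leq j \leq i_1$ I write $n_j = \lcm(m_1,m_j)/m_1 = m_j/\gcd(m_1,m_j)$. Thus $M_{m_1}$ is generated by $\{n_2,\ldots,n_{i_1}\}$ together with the unchanged generators $\{m_j : i_1 < j \leq q\}$, and each $n_j \neq 1$ since the distinct minimal generator $m_j$ does not divide $m_1$.

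Next I would show that the codimension splits as a sum over the blocks. Since generators from different blocks are coprime, the variables occurring in block $1$ (hence in every $n_j$, as $n_j \mid m_j$) are disjoint from those occurring in blocks $2,\ldots,c$, and distinct higher blocks use pairwise disjoint variables. Therefore a minimal covering set $T$ decomposes as a disjoint union of minimal covers of the individual blocks, giving $\codim(S/M_{m_1}) = d + \sum_{t=2}^c d_t$, where $d$ is the minimal cover of $\{n_2,\ldots,n_{i_1}\}$ and $d_t$ that of block $t$. For $t \geq 2$ the generators of block $t$ all share the stem $l_t$ of positive degree, so one variable dividing $l_t$ covers the block; since each block is nonempty, $d_t = 1$ exactly. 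Hence $\codim(S/M_{m_1}) = (c-1) + d$, and the statement reduces to deciding whether $d=1$ or $d \geq 2$.

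The heart of the argument is that $d = 1$ precisely when $\gcd(n_2,\ldots,n_{i_1}) \neq 1$, and that this gcd is controlled by the hypothesis. Writing $v_x$ for the exponent of a variable $x$, one has $v_x(n_j) = \max(0, v_x(m_j) - v_x(m_1))$, so $v_x(\gcd(n_2,\ldots,n_{i_1})) = \min_{2 \leq j \leq i_1}\max(0, v_x(m_j) - v_x(m_1))$. Because $t \mapsto \max(0, t - v_x(m_1))$ is nondecreasing, the minimum pulls inside, yielding $v_x(\gcd(n_2,\ldots,n_{i_1})) = \max(0, \min_{2 \leq j \leq i_1} v_x(m_j) - v_x(m_1)) = v_x\!\left(\gcd(m_2,\ldots,m_{i_1})/\gcd(m_1,\ldots,m_{i_1})\right)$, using $v_x(\gcd(m_1,\ldots,m_{i_1})) = \min(v_x(m_1), \min_{2 \leq j \leq i_1} v_x(m_j))$. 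Thus $\gcd(n_2,\ldots,n_{i_1}) = \gcd(m_2,\ldots,m_{i_1})/\gcd(m_1,\ldots,m_{i_1})$ as monomials. (The same identity reads off the diagram as $\bigcap_{j=2}^{i_1}(A_j \setminus A_1) = \left(\bigcap_{j=2}^{i_1} A_j\right)\setminus A_1$, in the spirit of Lemma \ref{Lemma 2}.)

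Combining these steps, in case (a) the quotient equals $1$, so $\gcd(n_2,\ldots,n_{i_1}) = 1$, no single variable covers $\{n_2,\ldots,n_{i_1}\}$, hence $d \geq 2$ (it is finite since each $n_j \neq 1$) and $\codim(S/M_{m_1}) \geq c+1$; in case (b) the quotient is nontrivial, so $\gcd(n_2,\ldots,n_{i_1}) \neq 1$, any variable dividing it covers the modified block, $d = 1$, and $\codim(S/M_{m_1}) = c$. The hypothesis $i_1 \geq 2$ is exactly what makes $\{n_2,\ldots,n_{i_1}\}$ nonempty, so that $d$ is meaningful. I expect the main obstacle to be the second step, namely justifying rigorously that cross-block variables cannot shrink a minimal cover and that each intact block contributes exactly $1$, so that the codimension genuinely decomposes as $(c-1)+d$; the exponent identity in the third step is then a short monotonicity computation.
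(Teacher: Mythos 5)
Your proof is correct and follows essentially the same route as the paper's: both arguments reduce the question to whether the modified first-block generators $n_j=\lcm(m_1,m_j)/m_1$ admit a common variable, identify $\gcd(n_2,\ldots,n_{i_1})$ with the quotient $\gcd(m_2,\ldots,m_{i_1})/\gcd(m_1,\ldots,m_{i_1})$, and then add the contribution $c-1$ coming from the untouched blocks. The only difference is bookkeeping---you verify the $\gcd$ identity by an exponent-vector computation, whereas the paper derives it (at the level of degrees) from the polarization sets $A_t$ via Lemma \ref{Lemma 2}(ii); your write-up is in fact slightly more explicit about the block-additivity of codimension, which the paper uses implicitly.
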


\begin{proof}
(a) When $i_1 = 2$, $\dfrac{\gcd(m_2,\ldots,m_{i_1})}{\gcd(m_1,m_2,\ldots,m_{i_1})}=\dfrac{\gcd(m_2)}{\gcd(m_1,m_2)} \neq 1$. Thus, we may assume $i_1 \geq 3$. By Lemma \ref{Lemma 2} (ii),

 $\#  \left( \bigcap\limits_{t=2}^{i_1} A_t \right)=\deg(\gcd(m_2,\ldots,m_{i_1})) = \deg(\gcd(m_1,\ldots,m_{i_1})) = \#  \left( \bigcap\limits_{t=1}^{i_1} A_t \right)$.
It follows that $\bigcap\limits_{t=1}^{i_1} A_t = \bigcap\limits_{t=2}^{i_1} A_t$. Hence, $\bigcap\limits_{t=2}^{i_1} A_t \subseteq A_1$ and $\bigcap\limits_{t=2}^{i_1} (A_t\setminus A_1) = \left[\bigcap\limits_{t=2}^{i_1} A_t\right] \setminus A_1 = \varnothing$.
For each $2\leq t\leq i_1$, let $m'_t = \dfrac{\lcm(m_1,m_t)}{m_1}$. Note that $A_t\setminus A_1$ is the set associated to $m'_t$. By Lemma \ref{Lemma 2} (ii),  $\deg(\gcd(m'_2,\ldots,m'_{i_1})) = \# \bigcap\limits_{t=2}^{i_1} (A_t\setminus A_1)= \#(\varnothing) = 0$. Therefore, $\gcd(m'_2,\ldots,m'_{i_1}) = 1$. It follows that $\codim\left(\dfrac{S}{(m'_2,\ldots,m'_{i_1})}\right) \geq 2$. On the other hand, $\codim\left(\dfrac{S}{(m_{i_1+1},\ldots,m_{i_c})}\right) = c-1$. Combining these facts, we get 
\[\codim\left(\dfrac{S}{M_{m_1}}\right) = \codim\left(\dfrac{S}{(m'_2,\ldots,m'_{i_1},m_{i_1+1},\ldots,m_{i_c})}\right) \geq 2 + (c-1 )= c+1.\]
(b) Since $\dfrac{\gcd(m_2,\ldots,m_{i_1})}{\gcd(m_1,m_2,\ldots,m_{i_1})} \neq 1$, we must have that $\gcd(m_2,\ldots,m_{i_1}) \nmid m_1$. Hence, $\bigcap\limits_{t=2}^{i_1} A_t \subsetneq A_1$. Thus, $\varnothing\subsetneq \left(\bigcap\limits_{t=2}^{i_1} A_t\right)\setminus A_1 = \bigcap\limits_{t=2}^{i_1} (A_t\setminus A_1)$. It follows that $1\leq \#\left(\bigcap\limits_{t=2}^{i_1} (A_t\setminus A_1)\right) = \deg \left(\gcd(m'_2,\ldots,m'_{i_1})\right)$. Hence, there is a variable dividing each of $m'_2,\ldots,m'_{i_1}$. Since $M_{m_1} = (m'_2,\ldots,m'_{i_1},m_{i_1+1},\ldots,m_{i_c})$, it follows that $\codim (S/M_{m_1}) = c.$
\end{proof}

The next theorem, which is the main result of this section, shows that the multiplicity of a stem ideal is the product of the degrees of its stems.

\begin{theorem}\label{Theorem stem-and-leaf} 
Let $M = (m_1,\ldots,m_q)$ be a stem ideal with stems $l_1,\ldots,l_c$. Then
\[\e(S/M) = \prod\limits_{t=1}^c \deg(l_{t})\]
\end{theorem}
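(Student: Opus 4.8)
The plan is to prove the formula by strong induction on the number of generators $q$, peeling off one dominant generator at a time through the third structural decomposition together with its companion results Lemma \ref{Lemma 3rd sd} and Lemma \ref{Lemma stem-and-leaf}. Since the claimed value $\prod_{t=1}^c \deg(l_t)$ is symmetric in the stems, I may reorder the blocks so that $i_1-i_0 \geq i_2-i_1 \geq \ldots \geq i_c-i_{c-1}\geq 1$, which is exactly the hypothesis needed to invoke Lemma \ref{Lemma stem-and-leaf}. If $i_1=1$, then every block is a singleton, so $q=c$, the ideal $M$ is a monomial complete intersection with $l_t=m_t$, and Theorem \ref{Theorem 3.2}(ii) gives $\e(S/M)=\prod_{t=1}^c\deg(m_t)=\prod_{t=1}^c\deg(l_t)$; this is the base case. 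Thus I may assume $i_1\geq 2$.

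Next I would analyze $M_1=(m_2,\ldots,m_q)$. Each $m_j$ with $j\geq 2$ retains its dominant variable once $m_1$ is discarded, so $M_1$ is again dominant, and its blocks are $\{m_2,\ldots,m_{i_1}\}$ together with the unchanged blocks $\{m_{i_{t-1}+1},\ldots,m_{i_t}\}$ for $t\geq 2$. Hence $M_1$ is a stem ideal with $c$ stems, namely $\gcd(m_2,\ldots,m_{i_1}),l_2,\ldots,l_c$, and $\codim(S/M_1)=c=\codim(S/M)$. By the inductive hypothesis, applied to the $q-1$ generators of $M_1$ after reordering its blocks to meet the ordering assumption,
\[\e(S/M_1)=\deg(\gcd(m_2,\ldots,m_{i_1}))\cdot\prod_{t=2}^c\deg(l_t).\]

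Now I split into the two cases of Lemma \ref{Lemma stem-and-leaf}. In case (a) one has $\gcd(m_2,\ldots,m_{i_1})=\gcd(m_1,\ldots,m_{i_1})=l_1$, so the displayed product is $\prod_{t=1}^c\deg(l_t)$; moreover $\codim(S/M_{m_1})\geq c+1$, so Lemma \ref{Lemma 3rd sd}(ii) yields $\e(S/M)=\e(S/M_1)=\prod_{t=1}^c\deg(l_t)$. In case (b) one has $\codim(S/M_{m_1})=c$, so Lemma \ref{Lemma 3rd sd}(i) gives $\e(S/M)=\e(S/M_1)-\e(S/M_{m_1})$. Here I must check that $M_{m_1}=(m_2',\ldots,m_q')$, with $m_j'=\lcm(m_1,m_j)/m_1$, is itself a stem ideal: for $j>i_1$ one has $\gcd(m_1,m_j)=1$ so $m_j'=m_j$, and the blocks $2,\ldots,c$ (with stems $l_2,\ldots,l_c$) survive intact, while the new first block $\{m_2',\ldots,m_{i_1}'\}$ is coprime to them because $m_t'\mid m_t$ and $m_t$ is coprime to every $m_j$ with $j>i_1$. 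Using that $A_t\setminus A_1$ is the set associated to $m_t'$ together with Lemma \ref{Lemma 2}(ii), the degree of the new first stem is
\[\#\Big(\bigcap_{t=2}^{i_1}(A_t\setminus A_1)\Big)=\#\Big(\bigcap_{t=2}^{i_1}A_t\Big)-\#\Big(\bigcap_{t=1}^{i_1}A_t\Big)=\deg(\gcd(m_2,\ldots,m_{i_1}))-\deg(l_1).\]
Applying the inductive hypothesis to $M_{m_1}$ and substituting into $\e(S/M)=\e(S/M_1)-\e(S/M_{m_1})$, the common factor $\prod_{t=2}^c\deg(l_t)$ factors out and the difference of the two first-stem degrees equals $\deg(l_1)$, so $\e(S/M)=\deg(l_1)\prod_{t=2}^c\deg(l_t)=\prod_{t=1}^c\deg(l_t)$.

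The step I expect to be the main obstacle is confirming rigorously that the derived ideals $M_1$ and, above all, $M_{m_1}$ are genuine stem ideals, so that the inductive hypothesis legitimately applies to them. The block structure and the cross-block coprimality follow directly from the definitions, and the degree bookkeeping is a routine use of Lemma \ref{Lemma 2}; the delicate point is that $M_{m_1}$ remains dominant and is minimally generated by $m_2',\ldots,m_q'$, which I would establish from the properties of the third structural decomposition in [Al1]. Once dominance is secured, the codimension dichotomy supplied by Lemma \ref{Lemma stem-and-leaf} dictates precisely which branch of Lemma \ref{Lemma 3rd sd} to use, and the multiplicities combine as above to close the induction.
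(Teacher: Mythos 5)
Your proposal is correct and follows essentially the same route as the paper's own proof: induction on $q$ with the blocks reordered so that $i_1-i_0\geq\cdots\geq i_c-i_{c-1}$, the complete-intersection case as base, and then the dichotomy of Lemma \ref{Lemma stem-and-leaf} feeding into Lemma \ref{Lemma 3rd sd}(ii) or (i), with the same Lemma \ref{Lemma 2} computation showing $\deg(\gcd(m_2,\ldots,m_{i_1}))-\deg(\gcd(m'_2,\ldots,m'_{i_1}))=\deg(l_1)$. If anything, you are more careful than the paper, which applies the induction hypothesis to $M_1$ and $M_{m_1}$ without explicitly verifying (as you do, and correctly flag as the delicate point) that they are again stem ideals.
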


\begin{proof}
Without loss of generality, we may assume that $i_1=i_1-i_0\geq i_2-i_1\geq \cdots \geq i_c-i_{c-1} \geq 1$. The proof is by induction on the number $q$ of minimal generators of $M$.\\
Suppose that $q=1$. Then $M=(m_1)$ is a complete intersection, and $\e(S/M) = \deg(m_1) = \deg(l_{1})$.\\
Suppose now that the theorem holds when $M$ is minimally generated by $q-1$ monomials. Let us prove the theorem in the case that $M$ is minimally generated by $q$ monomials.\\
We will consider three cases. \\
First case: $i_1=1$. In this case, we must have that $i_t = t$, for all $t=1,\ldots,c$. (In particular, $c=i_c=q$.) Thus, if $1\leq r<s\leq q$, we have that $r=i_r<s$ and, by property $(ii)$ of Definition \ref{Definition stem-and-leaf}, $\gcd(m_r,m_s) = 1$. This means that $M$ is a complete intersection and, therefore, $\e(S/M) = \prod\limits_{t=1}^q \deg(m_t) = \prod\limits_{t=1}^c \deg(m_{i_t}) = \prod\limits_{t=1}^c \deg(l_{t})$.\\
Second case: $i_1\geq 2$, and $\gcd(m_2,\ldots,m_{i_1}) = \gcd(m_1,m_2,\ldots,m_{i_1})$. Then $\codim(S/M) = \codim(S/M_1) = c$, and $\codim(S/M_{m_1})\geq c+1$, by Lemma  \ref{Lemma stem-and-leaf} (i). \\
It follows from Lemma \ref{Lemma 3rd sd} (ii), that $\e(S/M) = \e(S/M_1)$ and, by induction hypothesis, 
\begin{align*}
\e(S/M_1) &= \deg \left(\gcd(m_2,\ldots,m_{i_1})\right) \prod\limits_{t=2}^c \deg(l_{t})\\
& = \deg(\gcd(m_1,\ldots,m_{i_1})) \prod\limits_{t=2}^c \deg(l_{t})\\
& = \prod\limits_{t=1}^c \deg(l_{t}).
\end{align*}
Third case: $i_1\geq 2$, and $\gcd(m_2,\ldots,m_{i_1}) \neq \gcd(m_1,\ldots,m_{i_1})$. Then $\codim(S/M) = \codim(S/M_1) = \codim(S/M_{m_1}) = c$, and, by Lemma \ref{Lemma 3rd sd}(i), $\e(S/M) = \e(S/M_1) - \e(S/M_{m_1})$. Notice that

\begin{align*}
\deg\left(\gcd(m_2,\ldots,m_{i_1})\right) - \deg\left(\gcd(m'_2,\ldots,m'_{i_1})\right)
 &= \#\left[\bigcap\limits_{t=2}^{i_1} A_t\right] - \#\left[\bigcap\limits_{t=2}^{i_1} ((A_1\cup A_t)\setminus A_1)\right]\\
&=\#\left[\bigcap\limits_{t=2}^{i_1} A_t\right] - \#\left[\bigcap\limits_{t=2}^{i_1} (A_t\setminus A_1)\right]\\
& = \#\left[\bigcap\limits_{t=2}^{i_1} A_t\right] - \#\left[\left(\bigcap\limits_{t=2}^{i_1} A_t\right)\setminus A_1\right]\\
&= \#\left[\bigcap\limits_{t=1}^{i_1} A_t\right] \\
&= \deg\left(\gcd(m_1,\ldots,m_{i_1})\right).
\end{align*}
By induction hypothesis,
\begin{align*}
\e(S/M) &= \e(S/M_1) - \e(S/M_{m_1})\\
&= \deg\left(\gcd(m_2,\ldots,m_{i_1}) \prod\limits_{t=2}^c \deg(l_{t})\right) - \deg\left(\gcd(m'_2,\ldots,m'_{i_1}) \prod\limits_{t=2}^c \deg(l_{t})\right)\\
&= \left[\deg\left(\gcd(m_2,\ldots,m_{i_1})\right) - \deg\left(\gcd(m'_2,\ldots,m'_{i_1})\right)\right] \prod\limits_{t=2}^c \deg(l_{t})\\
&= \deg\left(\gcd(m_1,\ldots,m_{i_1})\right) \prod\limits_{t=2}^c \deg(l_{t})\\
&= \prod\limits_{t=1}^c \deg(l_{t}).
\end{align*}
\end{proof}

In Example \ref{Example stem-and-leaf}, the stems of a stem ideal $M$ are shown to be $l_1=c$ and $l_2=d$. By Theorem \ref{Theorem stem-and-leaf}, $\e(S/M) = \deg(l_{1})\deg(l_{2})=1.1=1$. In general, if $M = (m_1,\ldots,m_q)$ is a stem ideal, its multiplicity depends on its codimension, and is given by the following table:

\begin{tabular}{ c | c | c }
$\codim(S/M)$ & Diagram of $M$ & $\e(S/M)$\\
  \hline			
\begin{tikzpicture} \draw (1,2.7) node {$1$};  \draw (1,1) node {$$}; \end{tikzpicture}  & \begin{tikzpicture}
\draw (0,2) circle (0.8cm) ;
\draw (-0.1,1.5) circle (0.8cm);
\draw (-0.1,1.7) node {$l_1$};
\draw (-0.55,1.5) circle (0.8cm);
\draw (0.5,1.75) circle(0.8cm);
\draw (0,0.3) node {$q$ sets};
\draw (0,-0.1) node {$$};
\end{tikzpicture} & \begin{tikzpicture} \draw (1,2.7) node {$\deg(l_1)$};  \draw (1,1) node {$$}; \end{tikzpicture} \\
 \begin{tikzpicture} \draw (1,2.5) node {$2$};  \draw (1,1) node {$$}; \end{tikzpicture}  &\begin{tikzpicture}
\draw (0,2) circle (0.8cm) ;
\draw (-0.1,1.5) circle (0.8cm);
\draw (-0.1,1.7) node {$l_1$};
\draw (-0.55,1.5) circle (0.8cm);
\draw (0.5,1.75) circle(0.8cm);
\draw (0,0.3) node {$r$ sets};

\draw (3.3,2) circle (0.8cm) ;
\draw (3.2,1.5) circle (0.8cm);
\draw (3.2,1.7) node {$l_2$};
\draw (2.75,1.5) circle (0.8cm);
\draw (3.8,1.75) circle(0.8cm);
\draw (3.3,0.3) node {$q-r$ sets};
\end{tikzpicture}   & \begin{tikzpicture} \draw (1,2.5) node {$ \deg(l_1) . \deg(l_2)$};  \draw (1,1) node {$$}; \end{tikzpicture} \\
& &  \\
 $ \vdots$ &$ \vdots$ &$ \vdots$ \\
& &  \\
& &  \\
\begin{tikzpicture} \draw (1,1.7) node {$q$};  \draw (1,1) node {$$}; \end{tikzpicture} &\begin{tikzpicture}
\draw (-3,0) circle(0.4cm); \draw (-3,0) node {$l_1$}; \draw (-3,-0.7) node {$1$ set};
 \draw(-2,0) circle(0.4cm); \draw (-2,0) node{$l_2$}; \draw (-2,-0.7) node {$1$ set};
\draw (-1,0) node {$\ldots$};  \draw (-1.5,0) node {$$};  \draw(0,0) circle(0.4cm); \draw (0,0) node{$l_{q-1}$}; \draw (0,-0.7) node {$1$ set};
 \draw(1,0) circle(0.4cm); \draw(1,0) node {$l_q$}; \draw (1,-0.7) node {$1$ set};
\end{tikzpicture} & \begin{tikzpicture} \draw (1,1.7) node {$ \prod\limits_{i=1}^q \deg(l_i)$};  \draw (1,1) node {$$}; \end{tikzpicture} \\
  \hline  
\end{tabular} 

 \bigskip

\begin{remark}: In the diagrams above, abusing the notation, we have identified the sets associated to the stems $l_i$ with the stems themselves.
\end{remark}

\begin{remark}: In the codimension 1 case, we do not need to assume that $M$ is a stem ideal. The result follows from Theorem \ref{Theorem 3.2} (i), and holds for arbitrary monomial ideals. \\
\end{remark}
In the next three results we investigate quadratic dominant ideals, which are simple particular cases of stem ideals. We describe their multiplicity as well as their regularity, and show how one of these invariants can be expressed in terms of the other.

\begin{corollary} \label{coro}
Let $M$ be a quadratic dominant ideal with minimal generating set $G$. Then
\[\e(S/M) = 2^{\#U},\]
where $U =\left \{m\in G: \lcm(m,m') = 1, \text{ for all } m'\in G\setminus\{m\}\right\}$.
\end{corollary}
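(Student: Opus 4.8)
The plan is to recognize $M$ as a stem ideal and read off its stems, after which Theorem \ref{Theorem stem-and-leaf} delivers the multiplicity in one line. Here $U$ is the set of \emph{isolated} generators, those coprime to every other member of $G$ (for monomials of positive degree this is what the condition defining $U$ amounts to). The heart of the argument is to show that the non-isolated generators organize themselves into blocks, each sharing a single variable, so that all of $G$ forms a stem ideal whose stems are the isolated generators (each of degree $2$) together with exactly one degree-$1$ stem per block.

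First I would exploit dominance together with the quadratic hypothesis to pin down the local shape of each generator. A quadratic monomial is either $z^2$ or $zu$ with $z\neq u$, and since $M$ is dominant, every $m\in G$ has a dominant variable $x$ whose exponent in $m$ strictly exceeds its exponent in all other generators. For a quadratic generator this forces $x$ to occur in no other generator, except when $m=x^2$, in which case it only forces the other generators to carry $x$ with exponent at most $1$. The decisive consequence I would extract is that a quadratic dominant generator can meet another generator through \emph{at most one} of its variables, namely its non-dominant one. Thus each non-isolated $m$ acquires a well-defined \textbf{link variable} $z_m$, the unique variable through which it touches the rest of $G$.

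Next I would show that the relation ``shares a variable'' partitions the non-isolated generators into blocks, each consisting of all generators divisible by a fixed variable $z$. Starting from a non-isolated $m$ with link variable $z$, every generator meeting $m$ contains $z$; and any such $m'$ must itself have $z$ as its link variable, since its dominant variable is private and $z$ cannot be dominant for $m'$ (it already occurs in $m$). Iterating, the connected component of $m$ is exactly the set of generators divisible by $z$, all of which share $z$, while the second variable of each is private and hence divides no other generator. It follows that the gcd of a block equals $z$, of degree $1$; and two generators in different blocks, or an isolated generator paired with any other, are coprime by construction. Reordering $G$ so that each block and each isolated generator is consecutive, conditions (i) and (ii) of Definition \ref{Definition stem-and-leaf} hold, so $M$ is a stem ideal whose stems are the link variable $z$ (degree $1$) for each block and the generator $m$ itself (degree $2$, since $\gcd(m)=m$) for each $m\in U$.

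Finally I would apply Theorem \ref{Theorem stem-and-leaf} to obtain $\e(S/M)=\prod_t\deg(l_t)$: the degree-$1$ stems from the blocks contribute factors of $1$, while the $\#U$ stems of degree $2$ from the isolated generators each contribute a factor of $2$, giving $\e(S/M)=2^{\#U}$. The main obstacle is the combinatorial step establishing the block decomposition, specifically proving that a non-isolated quadratic dominant generator links to $G$ through a single variable and that this makes each connected component the full set of multiples of one variable with gcd of degree exactly $1$. This is precisely where both hypotheses are indispensable, as the non-dominant ideal $(xy,yz,zw)$ shows: there the three generators are connected yet share no common variable, so the block structure, and with it the stem description, collapses.
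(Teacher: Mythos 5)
Your proof is correct and follows essentially the same route as the paper: both arguments use dominance to show that each quadratic generator can share at most one variable with the rest of $G$, conclude that $M$ is a stem ideal whose stems are the shared variables (degree $1$, one per block $U_t=\{m\in G: y_t\mid m\}$) together with the isolated generators of $U$ (degree $2$), and then invoke Theorem \ref{Theorem stem-and-leaf}. Your reading of the definition of $U$ as coprimality (i.e.\ $\gcd(m,m')=1$, correcting the paper's evident $\lcm$ typo) matches the paper's own usage in its proof.
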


\begin{proof}
Let $M = (m_1,\ldots,m_q)$. Let $\{y_1,\ldots,y_h\} \subseteq \{x_1,\ldots,x_n\}$ be the set of all variables that divide more than one element of $G$. For all $t=1,\ldots,h$, let $U_t = \{m\in G: y_t\mid m\}$ (Notice that $\# U_t\geq 2$.) Without loss of generality, we may assume that there are integers $j_1<j_2<\ldots<j_h$, such that $U_1 = \{m_1,\ldots,m_{j_1}\}$, $U_2 = \{m_{j_1+1},m_{j_1+2},\ldots,m_{j_2}\}$, $\ldots$, $U_h = \{m_{j_{h-1}+1},m_{j_{h-1}+2},\ldots,m_{j_h}\}$. Then $U = \{m_{j_h+1},m_{j_h+2},\ldots,m_{j_h+v}=m_q\}$. We will prove that the integers $i_0=0,i_1=j_1,\ldots,i_h=j_h,i_{h+1}=j_h+1, i_{h+2}=j_h+2,\ldots,i_{h+v} = j_h+v=q$, and the monomials $l_t = \gcd(m_{i_{t-1}+1},\ldots,m_{i_t})$, with $1\leq t\leq h+v$, satisfy the two defining conditions of a stem ideal.\\
First property. If $1\leq t\leq h$, then $l_t = \gcd(U_t)$. Since each monomial of $U_t$ is divisible by $y_t$, $\deg(l_t) = \deg(\gcd(U_t))\geq 1$. That is, $l_t$ is a monomial of positive degree. On the other hand, if $t = h+w$, with $1\leq w\leq v$, then $l_t = m_{i_t}$ and, once again, $l_t$ is a monomial of positive degree.\\
Second property. Suppose that, for some $t$, $r\leq i_t<s$. If $s\geq i_{h+1} = j_h+1$, then $m_s\in U$ and, by definition, $\gcd(m_s,m_r) = 1$. On the other hand, if $s<i_{h+1}$, then for some $t<k\leq h$, $m_s\in U_k$, and $m_r\notin U_k$. Therefore, $y_k\mid m_s$ and $y_k\nmid m_r$. 
Let $m_s = y_k y$, with $y\in \{y_1,\ldots,y_h\}$. Since $\# U_k\geq 2$ and $m_s$ is a dominant monomial, $y$ must be a dominant variable of $m_s$, which implies that $y\nmid m_r$. Hence, $\gcd(m_s,m_r) = 1$. \\
We have proven that $M$ is a stem ideal with stems $l_1 = \gcd(U_1),\ldots,l_h = \gcd(U_h), l_{h+1} = m_{i_h+1},\ldots, l_{h+v} = m_{i_{h+v}}$. Since for all $t=1,\ldots,h$, $\#U_t\geq 2$, it follows that $\deg(l_t) = 1$. Finally, by Theorem\ref{Theorem stem-and-leaf}, 
\[\e(S/M) = \prod\limits_{t=1}^{h+v} \deg(l_t) = \prod\limits_{t=h+1}^{h+v} \deg(l_t) = \prod\limits_{t=h+1}^{h+v} \deg(m_{i_t}) = 2^v = 2^{\#U}.\]
\end{proof}

\begin{proposition} \label{prop}
Let $M = (m_1,\ldots,m_q)$ be a quadratic dominant ideal with minimal generating set $G$. Let $U =\left \{m\in G: \lcm(m,m') = 1, \text{ for all } m'\in G\setminus \{m\}\right\}$. Let $\{y_1,\ldots,y_k\} \subseteq \{x_1,\ldots,x_n\}$ be the set of all variables that divide more than one element of $G$. Then
 $\reg(S/M) = \#U+k = \codim(S/M)$.
\end{proposition}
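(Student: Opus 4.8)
The plan is to handle the two asserted equalities separately, treating $\#U + k = \codim(S/M)$ as essentially free and concentrating all the work on $\reg(S/M) = \#U + k$. For the codimension, note that the proof of Corollary \ref{coro} already exhibits $M$ as a stem ideal whose stems are $\gcd(U_1), \ldots, \gcd(U_k)$ together with the $\#U$ isolated generators; since by Definition \ref{Definition stem-and-leaf} the number of stems equals the codimension, we get $\codim(S/M) = k + \#U$ at once. So everything reduces to computing the regularity.

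For the regularity I would first use that a dominant ideal has a minimal Taylor resolution (the minimality characterization of Section 2). Consequently $\betti_{s,j}(S/M) \neq 0$ exactly when some Taylor symbol of size $s$ has $\deg(\lcm) = j$, so that
\[\reg(S/M) = \max\left\{\deg(\lcm(m_{i_1}, \ldots, m_{i_s})) - s : \varnothing \neq \{m_{i_1}, \ldots, m_{i_s}\} \subseteq G\right\}.\]
Next I would pass to the diagram of $M$: because every generator is quadratic, each associated set $A_i$ has exactly two elements, so the $A_i$ are the edges of a (simple) graph $\Gamma$ on the polarized variables, and Lemma \ref{Lemma 2}(i) turns the displayed maximum into $\max_S\left(\#\bigl(\bigcup_{i \in S} A_i\bigr) - \#S\right)$, the maximum over edge subsets of (vertices spanned) minus (edges used). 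A short, purely combinatorial lemma then identifies this quantity with the maximum matching number $\nu(\Gamma)$: a matching of size $\nu$ realizes the value $\nu$, while for any $S$ one bounds (vertices $-$ edges) by the number of connected components of the subgraph spanned by $S$, and picking one edge per component produces a matching of that size.

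It remains to show $\nu(\Gamma) = \#U + k$, and this is where I expect the real work. The upper bound is clean: the first polarizations of the $k$ shared variables form a vertex cover of every non-isolated edge, while the generators in $U$ are exactly the isolated edges of $\Gamma$, so no matching can exceed $k + \#U$. The lower bound requires building an explicit matching of that size, and this is the step that genuinely uses dominance. The key observation is that if $y_t$ is a shared variable and $m = y_t z \in U_t$ with $z \neq y_t$, then $y_t$ cannot be a dominant variable of $m$ (it divides another generator), so $z$ must be, forcing $z$ to be private to $m$; since $\#U_t \geq 2$ and at most one element of $U_t$ equals $y_t^2$, each group $U_t$ supplies such an edge $m_t^\ast = \{(y_t)_1, z^{(t)}_1\}$. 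One then checks that the edges $m_1^\ast, \ldots, m_k^\ast$, being saturated by distinct shared vertices and distinct private pendants, are pairwise disjoint and disjoint from the $\#U$ isolated edges, yielding a matching of size $k + \#U$ and hence $\reg(S/M) = \nu(\Gamma) = \#U + k = \codim(S/M)$. The main obstacle is precisely this disjointness-and-saturation argument: without dominance a generator could be the product of two shared variables (as in $(ab, ac, bd)$), the shared vertices could fail to be simultaneously matchable, and the identity would break.
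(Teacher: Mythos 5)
Your proposal is correct, but it takes a genuinely different route from the paper, most visibly in the regularity computation. Both arguments start from the minimality of the Taylor resolution for dominant ideals, but they diverge immediately after that. For the upper bound $\reg(S/M)\leq \#U+k$ the paper uses an extension trick: if $[\tau]=[h_1,\ldots,h_i]$ attains the regularity $r$ and $h\in G\setminus\{h_1,\ldots,h_i\}$, dominance gives $h\nmid\lcm(h_1,\ldots,h_i)$, so the symbol $[h_1,\ldots,h_i,h]$ raises $\deg(\mdeg)$ by some $j\geq 1$, and maximality of $r$ forces $j=1$; iterating up to the full symbol yields $\reg(S/M)=\deg(\lcm(G))-q$, after which the quadratic hypothesis enters only in the count $\deg(\lcm(G))=q+\#U+k$. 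Note that this mechanism is valid for an arbitrary dominant ideal. Your route instead rewrites the regularity as $\max_S\left(\#\bigl(\bigcup_{i\in S}A_i\bigr)-\#S\right)$, identifies this with the matching number $\nu(\Gamma)$ of the polarized graph (a correct and cleanly argued combinatorial step, which however uses quadraticity essentially, since the $A_i$ must be edges), and then uses dominance to build an explicit maximum matching: each shared variable $y_t$ contributes an edge $y_tz$ with $z$ private, and dominance is exactly what rules out generators that are products of two shared variables --- the failure mode in nondominant examples such as $(ab,ac,bc)$, where $\nu=1$ but $k=3$. What each approach buys: the paper's proof is shorter and its upper-bound argument generalizes beyond quadratic ideals; yours produces the attractive identity $\reg(S/M)=\nu(\Gamma)$, in the spirit of matching bounds for edge ideals, and localizes precisely where dominance is needed. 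For the codimension, you invoke the stem-ideal structure exhibited in the proof of Corollary \ref{coro} together with the parenthetical assertion $\codim(S/M)=c$ in Definition \ref{Definition stem-and-leaf} (asserted there without proof, so your argument inherits that gap in principle, though the assertion is easy), whereas the paper reproves it directly: the variables $\{x_m:m\in U\}\cup\{y_1,\ldots,y_k\}$ cover all generators, giving $\codim(S/M)\leq\#U+k$, and one generator from each $U_t$ together with $U$ forms a complete intersection inside $M$, giving the reverse inequality. Both are legitimate; the paper's is self-contained.
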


\begin{proof}
For all $t=1,\ldots,k$, let $U_t = \{m\in G:y_t\mid m\}$. Let $i_t = \#U_t$. Then $U_t$ can be expressed in the form $U_t = \{y_tz_1,y_tz_2,\ldots,y_tz_{i_t}\}$. Hence, $\lcm(U_t) = y_tz_1z_2\ldots z_{i_t}$, and $\deg(\lcm(U_t)) = i_t+1$.\\
 Let $[\sigma] = [m_1,\ldots,m_q]$. Then $\deg(\mdeg[\sigma]) = \deg(\lcm(G)) = \sum\limits_{t=1}^k \deg(\lcm(U_t)) + \deg(\lcm(U)) =\sum\limits_{t=1}^k (i_t+1) + 2(\#U) = \sum\limits_{t=1}^k i_t + k+2 \#U$. Now, $q = \sum\limits_{t=1}^k \#U_t + \#U = \sum\limits_{t=1}^k i_t +\#U$. Thus, $\deg(\mdeg[\sigma]) = q+ [\#U+k]$. Hence, $\betti_{q,q+[\#U+k]}(S/M) \neq 0$, which means that $\reg(S/M)\geq \#U+k$.\\
Let $r = \reg(S/M) = \Max\{s: \betti_{i,i+s}(S/M) \neq 0\}$, and let $[\tau] = [h_1,\ldots,h_i]$ be a basis element of the minimal resolution of $S/M$, such that $\hdeg [\tau] = i$, and $\deg(\mdeg[\tau]) = i+r$. Since $M$ is dominant, if $h\in G\setminus \{h_1,\ldots,h_i\}$, then $h\nmid \lcm(h_1,\ldots,h_i)$. It follows that the basis element $[\tau'] = [h_1,\ldots,h_i,h]$ satisfies $\hdeg[\tau'] = i+1$; $\deg(\mdeg[\tau']) = i+r+j$, with $j\geq 1$. Hence, $\betti_{i+1,i+r+j}(S/M) \neq 0$. Since $\reg(S/M) = r$, we must have $j=1$. This implies that for all $k=i,\ldots,q$, there is a basis element $[\tau_k] \in \mathbb{T}_M$ that determines the regularity of $S/M$ (that is, $\hdeg[\tau_k] = k$, and $\deg(\mdeg[\tau_k]) = k+r$). In particular, for $[\tau_q] = [m_1,\ldots,m_q]$ we have $\hdeg[\tau_q] = q$, and $\deg(\mdeg[\tau_q]) = q+r$. Combining facts, we have that $\reg(S/M) = r = \#U+k$.

For each $m\in U$, let $x_m$ be a variable dividing $m$. It follows from the definition of $U$ that $x_m \neq x_m'$ if $m\neq m'$, and $\{x_m: m\in U \} \bigcap \{y_1,\ldots,y_k \}=\varnothing$. Let $X=\{x_m: m\in U \} \bigcup \{y_1,\ldots,y_k \}$. Note that every minimal generator is divisible by some variable of $X$. Thus, $\codim(S/M) \leq \#X=\#U+k$. Fot all $t=1,\ldots,k$, let $m'_t \in U_t$. Then the ideal generated by $\{m'_t: t=1,\ldots,k\} \bigcup U$ is a complete intersection. Thus, $\codim(S/M)=\#U+k$.
\end{proof}

\begin{corollary}
Let $M$ be a quadratic dominant ideal with minimal generating set $G$. Let $k$ be the number of variables that divide more than one element of $G$. Then
\[\e(S/M) = 2^{\reg(S/M) - k}= 2^{\codim(S/M) - k}\]
Equivalently, $\reg(S/M) =\codim(S/M)= k+ \log_2 \e(S/M)$.
\end{corollary}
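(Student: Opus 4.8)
The plan is to derive the final corollary directly from the two preceding results, Corollary \ref{coro} and Proposition \ref{prop}, rather than reprove anything from scratch. The key observation is that these two statements already package all the arithmetic we need: Corollary \ref{coro} gives $\e(S/M)=2^{\#U}$, and Proposition \ref{prop} gives the pair of equalities $\reg(S/M)=\#U+k=\codim(S/M)$. So the entire task reduces to eliminating the auxiliary quantity $\#U$ between these formulas and rewriting $2^{\#U}$ in terms of either $\reg(S/M)$ or $\codim(S/M)$.

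First I would invoke Proposition \ref{prop} to record that $\reg(S/M)=\#U+k$ and $\codim(S/M)=\#U+k$, which together force $\reg(S/M)=\codim(S/M)$ and, upon solving for the exponent, $\#U=\reg(S/M)-k=\codim(S/M)-k$. Next I would substitute this expression for $\#U$ into the formula $\e(S/M)=2^{\#U}$ supplied by Corollary \ref{coro}, obtaining immediately
\[\e(S/M)=2^{\#U}=2^{\reg(S/M)-k}=2^{\codim(S/M)-k}.\]
This establishes the first displayed identity of the corollary. For the equivalent reformulation, I would take $\log_2$ of both sides of $\e(S/M)=2^{\reg(S/M)-k}$, which yields $\log_2\e(S/M)=\reg(S/M)-k$, hence $\reg(S/M)=k+\log_2\e(S/M)$; combining this with the equality $\reg(S/M)=\codim(S/M)$ from Proposition \ref{prop} gives the full chain $\reg(S/M)=\codim(S/M)=k+\log_2\e(S/M)$.

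One point worth flagging is a notational consistency check: Corollary \ref{coro} uses the letter $h$ for the number of variables dividing more than one generator, while Proposition \ref{prop} and this corollary use $k$ for that same quantity, and the set $U$ is defined identically in all three statements. I would verify that these $h$ and $k$ indeed refer to the same count (the number of variables dividing at least two elements of $G$) so that the substitution of $\#U=\codim(S/M)-k$ into $\e(S/M)=2^{\#U}$ is legitimate. Given that alignment, there is no genuine obstacle here — the result is a two-line consequence of the preceding two theorems, and the only real content is the bookkeeping of matching the exponent $\#U$ across the two formulas and then taking a base-$2$ logarithm.
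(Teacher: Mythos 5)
Your proof is correct and takes exactly the paper's approach: the paper's own proof is just the one-line remark that the result follows from Corollary \ref{coro} and Proposition \ref{prop}, and your elimination of $\#U$ between $\e(S/M)=2^{\#U}$ and $\reg(S/M)=\#U+k=\codim(S/M)$, followed by taking $\log_2$, is precisely the intended argument. Your notational flag is also resolved correctly --- the $h$ appearing in the proof of Corollary \ref{coro} and the $k$ of Proposition \ref{prop} both count the variables dividing more than one element of $G$, so the substitution is legitimate.
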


\begin{proof}
It follows from Corollary \ref{coro} and Proposition \ref{prop}.
\end{proof}

 \section{Multiplicity and structural decomposition}

In this section we describe the multiplicities of ideals belonging to a class that extends the family of stem ideals.
 
Let $M$ be a dominant ideal such that $\codim(S/M)=c$. Suppose that $M$ can be expressed in the form $M=(m_1,\ldots,m_d,h_1,\ldots,h_c)$, where $(h_1,\ldots,h_c)$ is a complete intersection (note that stem ideals satisfy these hypotheses). Let $C=\{(j,\bar{m}) \in \mathbb{Z}^+\times S:$ there are integers $1\leq r_1<\ldots<r_j\leq d$, such that $\bar{m}=\lcm(m_{r_1},\ldots,m_{r_j})\} \cup \{(0,1)\}$. For each $(j,\bar{m}) \in C$, let $M_{\bar{m}}=(h'_1,\ldots,h'_c)$, where $h'_i=\dfrac{\lcm(\bar{m},h_i)}{\bar{m}}$ (in particular, $M_1=(h_1,\ldots,h_c)$). According to [Al1, Theorem 4.1], we have the following.
 
 \begin{theorem}\label{Theorem 5.1}
 For each integer $k$ and each monomial $l$,\\

 \[\betti_{k,l}(S/M)= \sum \limits_{(j,\bar{m}) \in C} \betti_{k-j,\frac{l}{\bar{m}}} (S/M_{\bar{m}}).\]
 \end{theorem}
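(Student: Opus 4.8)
The plan is to exploit the characterization proved earlier that $\mathbb{T}_M$ is minimal exactly when $M$ is dominant, so that the graded Betti numbers of $S/M$ can be read off directly as counts of Taylor symbols. Since $M$ is dominant, $\betti_{k,l}(S/M)$ equals the number of Taylor symbols $[\sigma]$ with $\hdeg[\sigma]=k$ and $\mdeg[\sigma]=l$. I would write each generating subset as a disjoint union $\sigma=\sigma_m\cup\sigma_h$, with $\sigma_m\subseteq\{m_1,\dots,m_d\}$ and $\sigma_h\subseteq\{h_1,\dots,h_c\}$, so that $\hdeg[\sigma]=|\sigma_m|+|\sigma_h|$ and $\mdeg[\sigma]=\lcm(\lcm(\sigma_m),\lcm(\sigma_h))$. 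The whole argument then amounts to reorganizing this count according to the $m$-part $\sigma_m$.

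Two lemmas carry the argument. First, a multiplicativity of multidegrees: setting $\bar m=\lcm(\sigma_m)$ and $h'_i=\lcm(\bar m,h_i)/\bar m$, a variable-by-variable check (comparing $\max(a,b_{s_1},\dots)$ with $a+\max(0,b_{s_1}-a,\dots)$ for each variable) gives $\mdeg[\sigma]=\bar m\cdot\lcm(h'_s:h_s\in\sigma_h)$, so that $\mdeg[\sigma]=l$ is equivalent to $\lcm(h'_s:h_s\in\sigma_h)=l/\bar m$. Second, I would check that $M_{\bar m}=(h'_1,\dots,h'_c)$ is again dominant: because $(h_1,\dots,h_c)$ is a monomial complete intersection the $h_i$ have pairwise disjoint supports, each $h'_i$ is supported inside $\mathrm{supp}(h_i)$, and each $h'_i$ still has positive degree (its dominant variable in $G$ beats every $m\in\sigma_m$, so it is not cancelled by dividing out $\bar m$); thus the $h'_i$ are pairwise coprime nonunits, whence $M_{\bar m}$ is dominant and $\mathbb{T}_{M_{\bar m}}$ is minimal. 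Consequently $\betti_{k-j,\,l/\bar m}(S/M_{\bar m})$ counts exactly the subsets $\sigma_h$ with $|\sigma_h|=k-j$ and $\lcm(h'_s:h_s\in\sigma_h)=l/\bar m$.

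The crux --- and the step I expect to be the main obstacle --- is reconciling the sum over \emph{distinct} pairs $(j,\bar m)\in C$ in the statement with the a priori sum over \emph{subsets} $\sigma_m$: one must know that two distinct subsets of $\{m_1,\dots,m_d\}$ cannot share the same lcm. This is precisely where dominance of $M$ is essential. For each $m_i$ fix a dominant variable $x_i$, so that $\deg_{x_i}(m_i)>\deg_{x_i}(g)$ for every other generator $g\in G$. Then for any subset $\sigma_m$ one has $m_i\in\sigma_m$ iff $\deg_{x_i}(\lcm(\sigma_m))=\deg_{x_i}(m_i)$, so $\sigma_m$ is recovered from $\bar m=\lcm(\sigma_m)$; hence $\sigma_m\mapsto(|\sigma_m|,\lcm(\sigma_m))$ is injective with image $C$. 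The same dominant-variable comparison applied to the $h_i$ shows that no $h_i$ divides $\bar m$, which is exactly what guarantees $h'_i\neq 1$ in the previous step.

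Putting the pieces together, I would partition the Taylor symbols of $M$ of homological degree $k$ and multidegree $l$ according to their $m$-part: for a fixed $\sigma_m$ with $(j,\bar m)=(|\sigma_m|,\lcm(\sigma_m))$, the admissible $h$-parts are counted by $\betti_{k-j,\,l/\bar m}(S/M_{\bar m})$. By the injectivity above, each $(j,\bar m)\in C$ corresponds to exactly one $\sigma_m$ (with the convention $\sigma_m=\varnothing\leftrightarrow(0,1)$ and $M_1=(h_1,\dots,h_c)$), so summing over $\sigma_m$ coincides with summing over $(j,\bar m)\in C$, yielding $\betti_{k,l}(S/M)=\sum_{(j,\bar m)\in C}\betti_{k-j,\,l/\bar m}(S/M_{\bar m})$. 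The only point requiring care is that every contributing term indeed has $M_{\bar m}$ dominant, so that its Taylor count is legitimate; this is secured by the no-$h_i\mid\bar m$ observation.
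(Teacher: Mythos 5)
Your proposal is correct, but there is nothing in the paper to compare it against: the paper offers no proof of this statement, quoting it instead from [Al1, Theorem 4.1], where it belongs to the general structural decomposition theory developed in that reference. Your argument is therefore a genuinely different, self-contained route, resting only on the paper's Theorem 2.3 (dominant $\Leftrightarrow$ $\mathbb{T}_M$ minimal). All the key steps check out: the variable-by-variable identity $\max(a,b_1,\ldots,b_k)=a+\max(0,b_1-a,\ldots,b_k-a)$ does give $\mdeg[\sigma]=\bar{m}\cdot\lcm(h'_s : h_s\in\sigma_h)$; dominance of each $h_i$ over every $m_j$ does force $h'_i\neq 1$, so each $M_{\bar{m}}$ is minimally generated by $c$ pairwise coprime nonunit monomials, hence is itself dominant and its minimal Taylor resolution counts its multigraded Betti numbers; and you correctly isolate the crux, namely that dominance of the $m_i$ makes $\sigma_m\mapsto\lcm(\sigma_m)$ injective, so that summing over the \emph{set} $C$ agrees with summing over subsets of $\{m_1,\ldots,m_d\}$ --- without this the stated formula (with $C$ a set rather than a multiset) would undercount. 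Two minor points you should make explicit: $\betti_{k-j,\,l/\bar{m}}(S/M_{\bar{m}})$ must be read as $0$ when $\bar{m}\nmid l$ (your partition handles such $\sigma_m$ silently, since no symbol with that $m$-part has multidegree $l$); and your argument never invokes the hypothesis $\codim(S/M)=c$, so you in fact prove the statement under weaker hypotheses than the paper states (that hypothesis is only needed later, in Lemma 5.2 and Theorem 5.3). As for what each approach buys: the citation imports a result established in [Al1] in a broader setting, while your counting proof makes Section 5 logically independent of [Al1] and quite transparent, at the cost of being tied to the dominant case --- which is all this paper ever uses.
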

 
  For practical reasons, the set $C$ defined above will be expressed in the form $C=\{(j_1,\bar{m}_1),\ldots, (j_w,\bar{m}_w)\}$. 
  The notation that we introduce now retains its meaning throughout this section. For each $i\geq 0$, let $\{[\theta_1^i],\ldots,[\theta_{t_i}^i]\}$ be the set of basis elements of the minimal resolution of $S/M$ in homological degree $i$ (notice that $t_i=\betti_i(S/M)$). Likewise, for each $i\geq 0$, and each $1\leq k\leq w$, let $\{[\sigma_{\bar{m}_k,1}^i],\ldots[\sigma_{\bar{m}_k,t_{\bar{m}_k,i}}^i]\}$ be the set of all basis elements of the minimal resolution of $S/M_{\bar{m}_k}$, in homological degree $i$ (thus, $t_{\bar{m}_k,i}=\betti_i(S/M_{\bar{m}_k})$).

 In order for the proof of the next theorem to be clear we will make the following convention. If $j_k, i$ are two integers such that $ i-j_k <0$, or $i-j_k \geq \pd(S/M_{\bar{m}_k})$, then we define $\deg[\sigma_{\bar{m}_k,j} ^{i-j_k}] + \deg(\bar{m}_k) = 0$. (The $[\sigma_{\bar{m}_k,j}^{i}]$ are the basis elements of the minimal resolution of $S/M_{\bar{m}_k}$ in homological degree $i$. In the proof of the next theorem, we will encounter expressions of the form $\deg[\sigma_{\bar{m}_k,j}^{i-j_k}] + \deg(\bar{m}_k)$. These expressions make sense when $0\leq i-j_k\leq \pd(S/M_{\bar{m}_k})$; otherwise, they do not, and we define them as $0$).

\begin{lemma} \label{Lemma 5.2}
With the above notation,
 \[\sum\limits_{i=1}^{\pd(S/M_{\bar{m}_k})} (-1)^i \sum\limits_{j=1}^{t_{\bar{m}_k,i}} \deg^l [\sigma_{\bar{m}_k,j}^i]=\begin{cases} 
  																						-1 &\text{ if } l=0\\0 &\text{ if } 1\leq l\leq c-1\\
																						 (-1)^c c! \e(S/M_{\bar{m}_k})&\text{ if } l=c.

																						\end{cases}\]
\end{lemma}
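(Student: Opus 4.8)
The plan is to reduce Lemma \ref{Lemma 5.2} to the Peskine--Szpiro formula (Lemma \ref{Lemma 1}) applied to the module $S/M_{\bar{m}_k}$, after first pinning down the codimension of this module. The crucial preliminary observation is that, for every $(j_k,\bar{m}_k)\in C$, the ideal $M_{\bar{m}_k}=(h'_1,\ldots,h'_c)$ is a complete intersection with $\codim(S/M_{\bar{m}_k})=c$. To see this I would invoke dominance: since $M$ is dominant and each $h_i$ is one of its minimal generators, there is a variable $x$ whose exponent in $h_i$ strictly exceeds its exponent in every other generator, in particular in each of $m_1,\ldots,m_d$. Because $\bar{m}_k$ is an $\lcm$ of a subset of $\{m_1,\ldots,m_d\}$, the exponent of $x$ in $\bar{m}_k$ is the maximum of those exponents, still strictly below its exponent in $h_i$; hence $h_i\nmid\bar{m}_k$ and $h'_i=\lcm(\bar{m}_k,h_i)/\bar{m}_k$ has positive degree. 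A direct exponent comparison also shows $h'_i$ involves only variables that occur in $h_i$, so the $h'_i$ are supported on pairwise disjoint variable sets (as the $h_i$ are, being a monomial complete intersection). Thus $(h'_1,\ldots,h'_c)$ is a complete intersection of $c$ monomials in disjoint variables, giving $\codim(S/M_{\bar{m}_k})=c$.

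With the codimension fixed at $c$, I would apply Lemma \ref{Lemma 1} to the minimal resolution of $S/M_{\bar{m}_k}$, whose basis elements in homological degree $i$ are precisely the $[\sigma_{\bar{m}_k,j}^i]$. For $1\leq l\leq c-1$ the formula yields $\sum_i(-1)^i\sum_j\deg^l[\sigma_{\bar{m}_k,j}^i]=0$, and for $l=c$ it yields $(-1)^c c!\,\e(S/M_{\bar{m}_k})$. The only bookkeeping point is the homological degree $0$ term: the single basis element of $F_0=S$ has multidegree $1$, hence degree $0$, so its contribution $0^l$ vanishes whenever $l\geq 1$. Removing it therefore does not alter the alternating sums, and the stated identity holds for $1\leq l\leq c$.

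The case $l=0$ must be treated separately, since Lemma \ref{Lemma 1} as stated begins at $k=1$. Here $\deg^0[\sigma_{\bar{m}_k,j}^i]=1$ for every basis element, so $\sum_{i\geq 0}(-1)^i\sum_j\deg^0[\sigma_{\bar{m}_k,j}^i]=\sum_{i\geq 0}(-1)^i\betti_i(S/M_{\bar{m}_k})$, which equals the rank of $S/M_{\bar{m}_k}$ as an $S$-module, namely $0$ (the module is torsion, as $\codim\geq 1$). The homological degree $0$ term now contributes $0^0=1$, so subtracting it leaves $-1$ for the sum starting at $i=1$, exactly as claimed.

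The routine content is the exponent arithmetic and the splitting off of the $i=0$ term; I expect the only genuine obstacle to be the first paragraph, namely confirming that dominance forces $\codim(S/M_{\bar{m}_k})=c$ for \emph{every} $\bar{m}_k$ at once. This uniform codimension is precisely what licenses using a single value $c$ across all three cases and lets the three moment identities be read off directly from Lemma \ref{Lemma 1}.
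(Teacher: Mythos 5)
Your proof is correct and takes essentially the same route as the paper: identify each $M_{\bar{m}_k}=(h'_1,\ldots,h'_c)$ as a monomial complete intersection of codimension $c$, read the cases $1\leq l\leq c$ off the Peskine--Szpiro formula (Lemma \ref{Lemma 1}), and handle $l=0$ by the vanishing of the alternating sum of Betti numbers. If anything, your first paragraph is more careful than the paper's proof, which infers that $M_{\bar{m}_k}$ is a complete intersection solely from $h'_i\mid h_i$, whereas you also use dominance to check $h_i\nmid\bar{m}_k$, so that each $h'_i\neq 1$ and $\codim(S/M_{\bar{m}_k})$ is genuinely $c$.
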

\begin{proof}
If $l=0$, then $\sum\limits_{i=1}^{\pd(S/M_{\bar{m}_k})} (-1)^i \sum\limits_{j=1}^{t_{\bar{m}_k,i}} \deg^l [\sigma_{\bar{m}_k,j}^i]=\sum\limits_{i=1}^{\pd(S/M_{\bar{m}_k})} (-1)^i \betti_i(S/M_{\bar{m}_k})=-1+\sum\limits_{i=0}^{\pd(S/M_{\bar{m}_k})} (-1)^i \betti_i(S/M_{\bar{m}_k})=-1.$\\
Let $1\leq l\leq c$. Recall that each $M_{\bar{m}_k}$ is of the form $M_{\bar{m}_k} = (h'_1,\ldots,h'_c)$, where 
  $h'_i=\dfrac{\lcm(m_{r_1},\ldots,m_{r_j},h_i)}{\lcm(m_{r_1},\ldots,m_{r_j})}$. Since $h'_i$ is a divisor of $h_i$, $M_{\bar{m}_k}$ is a complete intersection. Therefore, $\codim(S/M_{\bar{m}_k})=c$, and the formula above is simply a restatement of the Peskine-Szpiro formula.
\end{proof}

  \begin{theorem}\label{Theorem 2} 
  Let $M$ be a dominant ideal with $\codim(S/M)=c$. Suppose that $M$ can be expressed in the form $M=(m_1,\ldots,m_d,h_1,\ldots,h_c)$, where $(h_1,\ldots,h_c)$ is a complete intersection. Then, 
  \[\e(S/M)=\sum\limits_{k=1}^w (-1)^k \e(S/M_{\bar{m}_k}).\] 
  \end{theorem}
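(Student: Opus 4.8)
The plan is to feed the structural decomposition of Theorem~\ref{Theorem 5.1} into the Peskine--Szpiro formula (Lemma~\ref{Lemma 1}) and then recognize each resulting piece through Lemma~\ref{Lemma 5.2}. Since $\codim(S/M)=c$, applying Peskine--Szpiro to the minimal resolution $\{[\theta^i_j]\}$ of $S/M$ gives
\[
\e(S/M)=\frac{(-1)^c}{c!}\sum_i (-1)^i \sum_j \bigl(\deg[\theta^i_j]\bigr)^c .
\]
The goal is to rewrite this right-hand side so that each summand is visibly a multiple of some $\e(S/M_{\bar m_k})$, exploiting that every $M_{\bar m_k}$ is a complete intersection of the same codimension $c$ (as noted in the proof of Lemma~\ref{Lemma 5.2}).

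First I would convert the multigraded Betti identity of Theorem~\ref{Theorem 5.1} into a correspondence of basis elements: the identity says that the multiset of (homological degree, multidegree) pairs realized by the $[\theta^i_j]$ is the disjoint union, over $k=1,\dots,w$, of the pairs realized by the basis elements $[\sigma^{i'}_{\bar m_k,j'}]$ of $S/M_{\bar m_k}$, each shifted up by $j_k$ in homological degree and multiplied by $\bar m_k$ in multidegree. In particular each $[\theta^i_j]$ is matched with a $[\sigma^{\,i-j_k}_{\bar m_k,j'}]$ for which $\deg[\theta^i_j]=\deg(\bar m_k)+\deg[\sigma^{\,i-j_k}_{\bar m_k,j'}]$. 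Substituting this, swapping the summations so that $k$ runs outermost, and reindexing the homological degree by $i'=i-j_k$ turns the Peskine--Szpiro sum into
\[
\sum_{k=1}^w (-1)^{j_k}\sum_{i'}(-1)^{i'}\sum_{j'}\bigl(\deg(\bar m_k)+\deg[\sigma^{i'}_{\bar m_k,j'}]\bigr)^c .
\]

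Next I would expand the inner $c$-th power binomially and pull the constants $\binom{c}{l}\deg(\bar m_k)^{\,c-l}$ out, leaving inner sums $\sum_{i'}(-1)^{i'}\sum_{j'}\bigl(\deg[\sigma^{i'}_{\bar m_k,j'}]\bigr)^{l}$ for $l=0,\dots,c$. This is where Lemma~\ref{Lemma 5.2} is decisive: after adding back the homological-degree-zero term, whose only basis element $[\varnothing]$ has degree $0$ and so contributes $0^l$, the $l=0$ sum equals $1+(-1)=0$, the sums with $1\le l\le c-1$ equal $0$, and only $l=c$ survives with value $(-1)^c c!\,\e(S/M_{\bar m_k})$. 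Hence the bracket for each $k$ collapses to $(-1)^{j_k}(-1)^c c!\,\e(S/M_{\bar m_k})$, and multiplying the total by $(-1)^c/c!$ yields $\e(S/M)=\sum_{k=1}^w(-1)^{j_k}\e(S/M_{\bar m_k})$, which is the asserted formula (the exponent being the homological shift $j_k$).

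The hard part will be the bookkeeping rather than any deep idea. Two points need care. First, Lemma~\ref{Lemma 5.2} is stated for alternating sums beginning at $i'=1$, so the $i'=0$ boundary term must be reinstated by hand; it is exactly this term that forces the $l=0$ contribution to cancel and that confirms the intermediate powers $\deg(\bar m_k)^{\,c-l}$ with $1\le l\le c-1$ drop out rather than surviving. Second, the homological shift by $j_k$ is the source of the sign, and one must check, using the convention fixed before Lemma~\ref{Lemma 5.2}, that $i-j_k$ stays in the range $0\le i-j_k\le\pd(S/M_{\bar m_k})$ so that no basis elements are lost or double-counted when the order of summation is interchanged. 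Finally, the degenerate piece $(j,\bar m)=(0,1)$, where $\deg(\bar m)=0$, should be handled uniformly within the binomial expansion, since there only the $l=c$ term is nonzero from the start.
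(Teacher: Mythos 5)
Your proposal is correct and takes essentially the same route as the paper's own proof: Peskine--Szpiro applied to the minimal resolution of $S/M$, the structural decomposition of Theorem~\ref{Theorem 5.1} to rewrite each degree as $\deg(\bar m_k)+\deg[\sigma^{i-j_k}_{\bar m_k,j}]$, a binomial expansion with $k$ outermost, and Lemma~\ref{Lemma 5.2} to annihilate every term except $l=c$. The only differences are immaterial bookkeeping choices (you absorb the homological-degree-zero term into the expansion via its $0^l$ contribution, while the paper splits it off as $(-1)^{j_k}\deg^c(\bar m_k)$ and cancels it against the $l=0$ piece), and you correctly read the sign as $(-1)^{j_k}$, which is exactly what the paper's proof and its closing example establish.
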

  
  \begin{proof}
  $(-1)^c c! \e(S/M)= \sum\limits_{i=1}^{\pd(S/M)} (-1)^i \sum\limits_{j=1}^{t_i} \deg^c [\theta_j^i]$
\begin{align*}
                           &= \sum\limits_{i=1}^{\pd(S/M)} (-1)^i \sum\limits_{k=1}^w \sum\limits_{j=1}^{t_{\bar{m}_k,(i-j_k)}} \left(\deg[\sigma_{\bar{m}_k,j}^{i-j_k}] + \deg(\bar{m}_k)\right)^c\\
                           &= \sum\limits_{k=1}^w \left\{ \sum\limits_{i=j_k}^{\pd(S/M)} (-1)^i \sum\limits_{j=1}^{t_{\bar{m}_k,(i-j_k)}} \left( \deg[\sigma_{\bar{m}_k,j}^{i-j_k}] + \deg(\bar{m}_k)\right)^ c\right\}\\
                           &= \sum\limits_{k=1}^w \left\{ \sum\limits_{i=0}^{\pd(S/M)-j_k} (-1)^{i+j_k} \sum\limits_{j=1}^{t_{\bar{m}_k,i}} \left( \deg[\sigma_{\bar{m}_k,j}^i] + \deg(\bar{m}_k)\right) ^c\right\}\\
                           &= \sum\limits_{k=1}^w \left\{ (-1)^{j_k} \deg^c(\bar{m}_k) + (-1)^{j_k}  \sum\limits_{i=1}^{\pd(S/M_{\bar{m}_k})} (-1)^i \sum\limits_{j=1}^{t_{\bar{m}_k,i}} \left( \deg[\sigma_{\bar{m}_k,j}^i] + \deg(\bar{m}_k)\right) ^c\right\}\\  
                           &=\sum\limits_{k=1}^w \left\{ (-1)^{j_k} \deg^c(\bar{m}_k) + (-1)^{j_k} \sum\limits_{i=1}^{\pd(S/M_{\bar{m}_k})} (-1)^i \sum\limits_{j=1}^{t_{\bar{m}_k,i}} \sum\limits_{l=0}^c {c\choose l} \deg^{c-l} (\bar{m}_k) \deg^l [\sigma_{\bar{m}_k,j}^i]\right\} \\
                           &= \sum\limits_{k=1}^w (-1)^{j_k} \deg^c(\bar{m}_k) + \sum\limits_{k=1}^w (-1)^{j_k} \sum\limits_{l=0}^c {c\choose l} \deg^{c-l}(\bar{m}_k) \sum\limits_{i=1}^{\pd(S/M_{\bar{m}_k})} (-1)^i 
                           \sum\limits_{j=1}^{t_{\bar{m}_k,i}} \deg^l [\sigma_{\bar{m}_k,j}].                      
  \end{align*}
Applying Lemma \ref{Lemma 5.2} to the last expression, we obtain
\[(-1)^c c! \e(S/M) =\sum\limits_{k=1}^w (-1)^{j_k} \deg^c(\bar{m}_k) + \sum\limits_{k=1}^w (-1)^{j_k} \deg^c (\bar{m}_k) (-1) + \sum\limits_{k=1}^w (-1)^{c+j_k} c! \e(S/M_{\bar{m}_k}).\]
 
Therefore,
\[(-1)^c c! \e(S/M)= \sum\limits_{k=1}^w (-1)^{c+j_k} c! \e(S/M_{\bar{m}_k}).\]
Finally, multiplying both sides by $\dfrac{(-1)^c}{c!}$, we obtain:
\[\e(S/M) = \sum\limits_{k=1}^w (-1)^{j_k} \e(S/M_{\bar{m}_k}).\]
  \end{proof}
  
  \begin{corollary}\label{Corollary 3}
  Let $M$ be a dominant ideal with $\codim(S/M)=c$. Suppose that $M$ can be expressed in the form $M=(m_1,\ldots,m_d,h_1,\ldots,h_c)$, where $(h_1,\ldots,h_c)$ is a complete intersection. Then,
  \[\e(S/M) = \sum\limits_{1\leq r_1<\ldots<r_j\leq d} (-1)^j \prod\limits_{i=1}^c \deg\left(\dfrac{\lcm(m_{r_1},\ldots,m_{r_j},h_i)}{\lcm(m_{r_1},\ldots,m_{r_j})}\right).\]
  \end{corollary}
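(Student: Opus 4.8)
The plan is to derive Corollary \ref{Corollary 3} directly from Theorem \ref{Theorem 2} by computing each term $\e(S/M_{\bar{m}_k})$ explicitly. Theorem \ref{Theorem 2} already reduces the multiplicity of $M$ to the alternating sum $\sum_{k=1}^w (-1)^{j_k}\e(S/M_{\bar{m}_k})$, where each $M_{\bar{m}_k}$ is an ideal of the form $(h'_1,\ldots,h'_c)$ with $h'_i = \lcm(\bar{m}_k,h_i)/\bar{m}_k$. The key observation, already noted in the proof of Lemma \ref{Lemma 5.2}, is that each $h'_i$ divides $h_i$, so no variable can divide two distinct $h'_i$; hence $M_{\bar{m}_k}$ is itself a complete intersection of codimension $c$. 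Consequently, by Theorem \ref{Theorem 3.2} (ii), its multiplicity is simply the product of the degrees of its minimal generators, namely $\e(S/M_{\bar{m}_k}) = \prod_{i=1}^c \deg(h'_i) = \prod_{i=1}^c \deg\!\left(\lcm(\bar{m}_k,h_i)/\bar{m}_k\right)$.

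The remaining step is bookkeeping: unwinding the indexing set $C$. Recall that $C = \{(j_1,\bar{m}_1),\ldots,(j_w,\bar{m}_w)\}$ consists of the pair $(0,1)$ together with all pairs $(j,\lcm(m_{r_1},\ldots,m_{r_j}))$ arising from subsets $\{r_1<\cdots<r_j\}\subseteq\{1,\ldots,d\}$. First I would substitute $\bar{m}_k = \lcm(m_{r_1},\ldots,m_{r_j})$ and $j_k = j$ into the formula from Theorem \ref{Theorem 2}, so that the sum over $k$ becomes a sum over all nonempty subsets $\{r_1<\cdots<r_j\}$ of $\{1,\ldots,d\}$, with the sign $(-1)^{j_k}$ becoming $(-1)^j$. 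The term corresponding to $(0,1)$ contributes $\prod_{i=1}^c\deg(h_i)$, which is exactly the $j=0$ (empty subset) term of the claimed formula since $\lcm$ over the empty set is $1$. Combining these yields precisely the stated expression.

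The only subtlety to verify carefully is that the correspondence between indices $k$ and subsets $\{r_1<\cdots<r_j\}$ is a genuine bijection, so that each subset is counted exactly once with the correct sign. This is guaranteed by the definition of $C$: distinct subsets may in principle yield the same monomial $\bar{m}$, but the definition records pairs $(j,\bar{m})$, and the enumeration $\{(j_1,\bar{m}_1),\ldots,(j_w,\bar{m}_w)\}$ is taken over these pairs. I would remark that one should read the final sum $\sum_{1\leq r_1<\cdots<r_j\leq d}$ as ranging over all subsets (including the empty one, contributing the $j=0$ term), which matches the set $C$ term for term. I expect no real obstacle here—the mathematical content is entirely contained in Theorem \ref{Theorem 2} and the complete-intersection multiplicity formula of Theorem \ref{Theorem 3.2} (ii); the proof is a direct substitution followed by a routine re-indexing of the summation.
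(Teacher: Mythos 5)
Your proof follows the paper's own argument almost verbatim: apply Theorem \ref{Theorem 2}, observe that each $M_{\bar{m}_k}=(h'_1,\ldots,h'_c)$ with $h'_i=\lcm(\bar{m}_k,h_i)/\bar{m}_k$ a divisor of $h_i$ is again a complete intersection, so $\e(S/M_{\bar{m}_k})=\prod_{i=1}^c\deg(h'_i)$, and then rewrite the sum over $C$ as a sum over subsets of $\{1,\ldots,d\}$ (with the empty subset giving the $(0,1)$ term). The mathematical core is correct and identical to the paper's proof.

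However, the one point where you go beyond the paper---the claim that the sum over $C$ matches the sum over subsets ``term for term''---is justified incorrectly, and your justification as stated would fail. You argue that because $C$ records \emph{pairs} $(j,\bar{m})$, each subset is counted exactly once; but $C$ is a \emph{set} of pairs, so two distinct subsets of the same size $j$ whose lcms coincide produce a single element of $C$, counted once in Theorem \ref{Theorem 2} but twice in the corollary's sum. This collision really can happen for non-dominant monomials: for $m_1=a^2b^2c^2$, $m_2=a^3b$, $m_3=a^3c$, the subsets $\{m_1,m_2\}$ and $\{m_1,m_3\}$ both have lcm $a^3b^2c^2$, so both correspond to the pair $(2,a^3b^2c^2)$. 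What rescues the argument is the dominance hypothesis, which you never invoke at this step: since $M$ is dominant, each $m_i$ with $1\leq i\leq d$ is dominant in $\{m_1,\ldots,m_d\}$, and if $A\neq B$ are subsets of $\{m_1,\ldots,m_d\}$ with, say, $m\in A\setminus B$ having dominant variable $x$, then the exponent of $x$ in $\lcm(A)$ is at least its exponent in $m$, which strictly exceeds its exponent in $\lcm(B)$; hence $\lcm(A)\neq\lcm(B)$. Thus distinct subsets give distinct elements of $C$, the correspondence is a genuine bijection, and the re-indexing is legitimate. With this one repair your proof is complete and coincides with the paper's (which, for what it is worth, passes over this point in silence).
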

  
  \begin{proof}
  According to Theorem \ref{Theorem 2}, $\e(S/M)=\sum\limits_{k=1}^w (-1)^{j_k} \e(S/M_{\bar{m}_k})$. By definition, each $M_{\bar{m}_k}$ is of the form $M_{\bar{m}_k} = (h'_1,\ldots,h'_c)$, where 
  $h'_i=\dfrac{\lcm(m_{r_1},\ldots,m_{r_j},h_i)}{\lcm(m_{r_1},\ldots,m_{r_j})}$ (with $j=j_k$). Since $h'_i$ is a divisor of $h_i$, $M_{\bar{m}_k}$ is a complete intersection and, thus, 
  \[\e(S/M_{\bar{m}_k}) = \prod\limits_{i=1}^c \deg(h'_i) = \prod\limits_{i=1}^c \deg\left(\dfrac{\lcm(m_{r_1},\ldots,m_{r_j},h_i)}{\lcm(m_{r_1},\ldots,m_{r_j})}\right).\]
  \end{proof}

A particular case where the hypotheses of Corollary \ref{Corollary 3} are satisfied is when $M$ is a dominant almost complete intersection. In such case, $M$ can be expressed in the form $M=(m, h_1,\ldots,h_c)$, where $(h_1,\ldots,h_c)$ is a complete intersection, and $\codim(S/M)=c$. By Corollary \ref{Corollary 3},
 \[\e(S/M) = \prod\limits_{i=1}^c \deg(h_i) - \prod\limits_{i=1}^c \deg\left(\dfrac{\lcm(m, h_i)}{m}\right).\]

We close this article with an example where we illustrate Theorem  \ref{Theorem 2} and Corollary \ref{Corollary 3}.
  
  \begin{example}
  Let $M=(a^3c,abe^3,a^2b^2,c^2,d^2e^2)$. Notice that $M$ is dominant, with $\codim(S/M)=3$. In addition, $(a^2b^2,c^2,d^2e^2)$ is a complete intersection. Also, $C=\{ (0,1); (1,a^3c); (1,abe^3); (2,a^3bce^3) \}$. By Theorem \ref{Theorem 2},\\

 \begin{align*}
\e(S/M)
&= (-1)^0 \e(S/M_{1})+ (-1)^1 \e(S/M_{a^3c})+  (-1)^1 \e(S/M_{abe^3})+  (-1)^2 \e(S/M_{a^3bce^3})\\
&=(-1)^0 \e(S/(a^2b^2,c^2,d^2e^2))+ (-1)^1 \e(S/(b^2,c,d^2e^2))+  (-1)^1 \e(S/(ab,c^2,d^2))\\
&+  (-1)^2 \e(S/(b,c,d^2)).
 \end{align*}

Hence,\\
  \begin{align*}
  \e(S/M)
   &= (-1)^0 [\deg(a^2b^2) \deg(c^2) \deg(d^2e^2)] + (-1)^1 [\deg(b^2)\deg(c) \deg(d^2e^2)] \\
  &+ (-1)^1 [\deg(ab) \deg(c^2) \deg(d^2)] 
  + (-1)^2 [\deg(b) \deg(c) \deg(d^2)] \\
  &= 4.2.4-2.1.4-2.2.2+1.1.2 = 32-8-8+2 = 18.
  \end{align*}
  \end{example}

\section{A different approach}

In this section we introdude a new line of reasoning. The idea is to express the multiplicity of an ideal $M$ in terms of the multiplicity of some other ideal $M'$, and take full advantage of the Peskine-Szpiro formula. In particular, we will use the sometimes overlooked fact that the formula in Lemma \ref{Lemma 1} equals o when $k$ is less than the codimension of the ideal.

Below we sketch a new nad easier proof of Theorem \ref{Theorem 3.2}(i), where we illustrate this idea.

Let $M = (m_1,\ldots, m_q)$, and suppose that $\codim(S/M) = 1$. Set $l = \gcd(m_1,\ldots,m_q)$, and let $M'=(m'_1,\ldots,m'_q)$, where $m'_i = \dfrac{m_i}{l}$. Let $[\sigma_{ij}]$ (respectively, $[\tau_{ij}]$) be the $j$th basis element of $\mathbb{T}_M$ (respectively, $\mathbb{T}_{M'}$) in homological degree $i$, and let $\degree_{ij} = \deg(\mdeg[\sigma_{ij}])$ (respectively, $\cdegree_{ij} = \deg(\mdeg[\tau_{ij}])$). Since $\codim(S/M')\geq 2$, the formula of Lemma \ref{Lemma 1} yields
\[\sum\limits_{i=1}^q (-1)^i \sum\limits_{j=1}^{q\choose i} \cdegree_{ij} = 0\] Since $\codim(S/M) = 1$, we have

\begin{align*}
(-1)^1 1!\e(S/M) & =\sum\limits_{i=1}^q (-1)^i \sum\limits_{j=1}^{q\choose i} \degree_{ij}\\
& = \sum\limits_{i=1}^q (-1)^i \sum\limits_{j=1}^{q\choose i} [\deg(l)+ \cdegree_{ij}]\\
&= \sum\limits_{i=1}^q (-1)^i \sum\limits_{j=1}^{q\choose i} \deg(l) + \sum\limits_{i=1}^q (-1)^i \sum\limits_{j=1}^{q\choose i} \cdegree_{ij}\\
&= \deg(l) \sum\limits_{i=1}^q (-1)^i {q\choose i}\\
&= -\deg(l)
\end{align*}
Hence, $\e(S/M) = \deg(l) = \deg( \gcd(m_1,\ldots,m_q))$.

The next theorem, which gives the multiplicities of all monomial almost complete intersections, extends the result following Corollary \ref{Corollary 3} and, once more, illustrates the new approach introduced at the begining of this section. First we need a lemma.

\begin{lemma}\label{Lemma alpha}
Let $M = (m_1,\ldots,m_q,m)$ be an almost complete intersection, such that $M$ is not dominant, and $M_1 = (m_1,\ldots,m_q)$ is a complete intersection. Then, for some $1\leq i\leq q$, $M' = (m_1,\ldots,\widehat{m_i},\ldots,m_q,m)$ is a dominant almost complete intersection, where $M'' = (m_1,\ldots,\widehat{m_i},\ldots,m_q)$ is a complete intersection.
\end{lemma}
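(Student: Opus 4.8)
The plan is to exploit the fact that $M_1=(m_1,\ldots,m_q)$ is a monomial complete intersection, which forces the generators $m_1,\ldots,m_q$ to be supported on pairwise disjoint sets of variables; equivalently, each $m_i$ is dominant in $M_1$, with disjoint dominant variable sets. The only reason $M=(m_1,\ldots,m_q,m)$ can fail to be dominant is the presence of the extra generator $m$: either $m$ is not dominant, or adding $m$ destroys the dominance of some $m_i$. Since each $m_i$ uses variables no other $m_j$ touches, $m_i$ can lose its dominance only because $m$ ``catches up'' or exceeds $m_i$ in every variable appearing in $m_i$. So the first step is to translate ``$M$ is not dominant'' into a concrete statement: there exists an index $i$ such that, for every variable $x$ dividing $m_i$, the exponent of $x$ in $m$ is at least the exponent of $x$ in $m_i$ (otherwise $m_i$ would keep a dominant variable, since no $m_j$ with $j\neq i$ shares a variable with $m_i$). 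In other words $m_i \mid m$ in the support of $m_i$, i.e. $\gcd(m_i,m)$ carries the full $x$-part of $m_i$ for each $x\mid m_i$.

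Next I would verify that this same index $i$ does the job. Set $M''=(m_1,\ldots,\widehat{m_i},\ldots,m_q)$ and $M'=(m_1,\ldots,\widehat{m_i},\ldots,m_q,m)$. The ideal $M''$ is a complete intersection because deleting one generator from a complete intersection leaves a complete intersection: the remaining $q-1$ generators are still supported on pairwise disjoint variable sets. The more delicate point is that $M'$ is dominant. For the generators $m_j$ ($j\neq i$), each retains its original dominant variable $x_j$: no $m_k$ ($k\neq i$) shares $x_j$, and I must check that $m$ does not interfere, i.e. that the exponent of $x_j$ in $m$ is strictly less than in $m_j$. This should follow from minimality of the generating set together with the chosen maximality/extremality of $i$; the cleanest route is to pick $i$ appropriately (for instance, an index with $m_i\mid m$ on its support) and argue that $m$ cannot simultaneously dominate two disjoint generators without contradicting either the minimality of $M$ or the complete intersection structure of $M_1$.

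The remaining obligation is that the new generator $m$ is itself dominant in $M'$. Here the key observation is that once $m_i$ has been removed, $m$ inherits a dominant variable: since the support of $m_i$ was entirely absorbed by $m$, and $m_i$ is now gone, any variable that forced $m$ to be non-dominant in $M$ (namely a variable where $m_i$ beat $m$) has disappeared, and a variable of $m$ lying outside all the $m_j$ — which must exist because $M$ was an almost complete intersection with $q+1 > \codim = q$ generators, so $m$ contributes a genuinely new variable or a strictly larger exponent somewhere — serves as a dominant variable for $m$. I would make this precise using the disjoint-support description and Lemma \ref{Lemma 2} to track degrees and supports.

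The main obstacle I anticipate is pinning down the correct index $i$ and showing rigorously that removing exactly that $m_i$ repairs dominance for \emph{all} remaining generators simultaneously, rather than merely for one. The subtlety is that non-dominance of $M$ could in principle be ``caused'' by $m$ interacting with several $m_i$ at once; I expect to rule this out by using that the $m_j$ have pairwise disjoint supports (so $m$ can fully dominate at most the unique $m_i$ whose entire support it absorbs, since absorbing two disjoint supports would still leave each such generator a private dominant variable unless $m$ matched it everywhere, which I will show is impossible for two disjoint supports without violating minimality). Once the disjoint-support structure is exploited carefully, the dominance of $M'$ and the complete-intersection property of $M''$ should both fall out, and the choice of $i$ becomes essentially forced.
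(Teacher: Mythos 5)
Your translation of ``$M$ is not dominant'' is backwards, and this gap is fatal to the rest of the argument. You claim non-dominance forces an index $i$ such that every variable $x$ dividing $m_i$ has at least as large an exponent in $m$ as in $m_i$; but that condition says precisely $m_i \mid m$, which is impossible: it would make $m$ a redundant generator, so $M$ would equal the complete intersection $(m_1,\ldots,m_q)$ and could not be an almost complete intersection minimally generated by $q+1$ monomials. In fact the opposite of your diagnosis holds: since $m_i \nmid m$ (minimality), each $m_i$ has a variable in which it strictly beats $m$, and no other $m_j$ shares that variable (the supports of $m_1,\ldots,m_q$ are pairwise disjoint), so every $m_i$ \emph{is} dominant in $M$, and the only possible non-dominant generator is $m$ itself. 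Consequently the index you propose to delete does not exist, and the later steps inherit the error: in particular your claim that $m$ must have ``a variable lying outside all the $m_j$'' (or a variable where it strictly beats everyone) is also false, since non-dominance of $m$ means exactly that every variable dividing $m$ also divides some $m_j$ with at least as large an exponent.

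The correct choice of $i$ — which is what the paper makes — is the reverse of yours: $m$ is the non-dominant generator, so fixing any variable $y \mid m$, there is a generator $m_i$ with $y \mid m_i$ blocking $y$ (unique, by disjointness of supports); delete that $m_i$. Then $m$ becomes dominant in $y$ inside $M'$ because no surviving $m_r$ involves $y$ at all, while each surviving $m_r$ keeps its old dominant variable. There is also a point your proposal never addresses: you must check that $M'$ is an almost complete intersection rather than a complete intersection, i.e. that $m$ still shares a variable with some surviving generator. This is where minimality enters a second time: $m \nmid m_i$ gives a variable $z$ in which $m$ strictly beats $m_i$, and non-dominance of $m$ then produces $m_j$ with $j \neq i$ and $z \mid m_j$, so $\gcd(m_j,m)\neq 1$ and $\codim(S/M') = q-1$ as required.
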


 \begin{proof}
Let $1\leq r\leq q$. Since $m_r \nmid m$, there is a variable $x$ such that the exponent with which $x$ appears in the factorization of $m_r$ is larger than the exponent with which $x$ appears in the factorization of $m$. In addition, none of the monomials $m_1,\ldots,\widehat{m_{r}},\ldots,m_q$ is divisible by $x$, for $M_1$ is a complete intersection. Hence, $m_r$ is a dominant generator of $M$. Since $r$ is arbitrary, we have that $m_1,\ldots,m_q$ are dominant generators of $M$ and given that $M$ is not dominant, $m$ must be a nondominant generator of $M$. 

Let $y$ be a variable dividing $m$. Since $m$ is nondominant, there is a generator $m_i$ such that $y \mid m_i$. Given that $m \nmid m_i$ there is a variable $z$ such that the exponent with which $z$ appears in the factorization of $m$ is larger than the exponent with which $z$ appears in the factorization of $m_i$. Once again, since $m$ is not dominant, there is a generator $m_j$ (with $i \neq j$)  such that $z \mid m_j$. 

Therefore, the ideal $M' = (m_1,\ldots,\widehat{m_i},\ldots,m_q,m)$ is dominant. Also, since $M'' = (m_1,\ldots,\widehat{m_i},\ldots,m_q)$ is a complete intersection and $\gcd (m_j,m) \neq 1$, it follows that $M' = (m_1,\ldots,\widehat{m_i},\ldots,m_q,m)$ is an almost complete intersection. 
 \end{proof}

 \begin{theorem}\label{ACI}
Let  $M = (m_1,\ldots,m_q,m)$ be an almost complete intersection, where $M_1= (m_1,\ldots,m_q)$ is a complete intersection. Then
 \[\e(S/M) = \prod\limits_{i=1}^q \deg(m_i) - \prod\limits_{i=1}^q \deg\left(\dfrac{m_i}{\gcd(m_i,m)}\right).\]
\end{theorem}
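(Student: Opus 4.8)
The plan is to prove Theorem \ref{ACI} by splitting into two cases according to whether $M$ is dominant. The key observation is that the formula we must establish,
\[\e(S/M) = \prod\limits_{i=1}^q \deg(m_i) - \prod\limits_{i=1}^q \deg\left(\dfrac{m_i}{\gcd(m_i,m)}\right),\]
has already been proved in the dominant case. Indeed, in the paragraph following Corollary \ref{Corollary 3}, taking $d=1$ (so that $m_1$ of that corollary is our $m$ and $h_1,\ldots,h_c$ are our $m_1,\ldots,m_q$, with $c=q$), we obtain
\[\e(S/M) = \prod\limits_{i=1}^q \deg(m_i) - \prod\limits_{i=1}^q \deg\left(\dfrac{\lcm(m,m_i)}{m}\right).\]
Since $\deg\left(\frac{\lcm(m,m_i)}{m}\right) = \deg(m_i) - \deg(\gcd(m,m_i)) = \deg\left(\frac{m_i}{\gcd(m_i,m)}\right)$, this is exactly the desired formula. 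So the dominant case is immediate, and the real content is the reduction of the nondominant case to the dominant one.

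For the nondominant case, the plan is to invoke Lemma \ref{Lemma alpha}: there is an index $i$ such that, after dropping $m_i$, the ideal $M' = (m_1,\ldots,\widehat{m_i},\ldots,m_q,m)$ is a \emph{dominant} almost complete intersection whose ``complete intersection part'' $M'' = (m_1,\ldots,\widehat{m_i},\ldots,m_q)$ has $q-1$ generators. The crucial structural claim I would establish is that $M = M'$ as ideals; equivalently, that the generator $m_i$ is redundant, i.e. $m_i$ is divisible by some other generator of $M$. This should follow from the proof of Lemma \ref{Lemma alpha}: there $m$ was shown nondominant with $y \mid m$ and $y \mid m_i$ for the chosen $i$, and more careful bookkeeping (using that $M_1$ is a complete intersection, so $m_i$ shares no variable with the other $m_r$, while every variable of $m_i$ must also divide $m$) forces $m_i \mid m$, making $m_i$ redundant in $M = (m_1,\ldots,m_q,m)$. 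Hence $S/M = S/M'$ and in particular $\e(S/M) = \e(S/M')$.

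With $M = M'$ in hand, I would apply the already-proved dominant case to $M'$. This gives
\[\e(S/M) = \e(S/M') = \prod_{\substack{j=1 \\ j\neq i}}^{q} \deg(m_j) - \prod_{\substack{j=1 \\ j\neq i}}^{q} \deg\left(\dfrac{m_j}{\gcd(m_j,m)}\right),\]
a product over $q-1$ indices rather than $q$. To finish, I would show this equals the claimed $q$-factor expression, and here the main obstacle lies: I must account for the missing index $i$. Since $m_i \mid m$, we have $\gcd(m_i,m) = m_i$, so $\frac{m_i}{\gcd(m_i,m)} = 1$ and the factor $\deg\left(\frac{m_i}{\gcd(m_i,m)}\right) = \deg(1) = 0$. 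Thus the second product over all $q$ indices includes a zero factor and vanishes, collapsing to agree with the $(q-1)$-factor product only if that product is also zero — which it is not in general. This tension is the delicate point: I expect the correct resolution is that the roles are arranged so that the \emph{first} product gains the factor $\deg(m_i)$ while the second product is unaffected (its $i$th factor being $0$ is harmless since that product should be compared termwise), and verifying this bookkeeping — precisely how the redundant generator $m_i$ contributes $\deg(m_i)$ to the complete-intersection product but contributes nothing to the subtracted product — is the step I would write out most carefully. I would double-check it against a small example before committing to the final algebraic identity.
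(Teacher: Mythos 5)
Your dominant case is fine and is exactly the paper's first step (Corollary \ref{Corollary 3} with $d=1$). The nondominant case, however, rests on a structural claim that is not merely unproved but impossible: you want $m_i \mid m$ for the index $i$ produced by Lemma \ref{Lemma alpha}, so that a generator becomes redundant and $M = M' = (m_1,\ldots,\widehat{m_i},\ldots,m_q,m)$. If $m_i \mid m$, then $m \in (m_i)$ and $M = (m_1,\ldots,m_q) = M_1$ would be a complete intersection, contradicting the hypothesis that $M$ is an almost complete intersection (minimally generated by $q+1$ monomials with $\codim(S/M)=q$); the symmetric possibility $m \mid m_i$ likewise contradicts minimal generation. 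One can also see it by codimension count: Lemma \ref{Lemma alpha} says $M'$ is an almost complete intersection whose complete-intersection part $M''$ has $q-1$ generators, so $\codim(S/M')=q-1$, while $\codim(S/M)=q$; hence $M \neq M'$ in every case. The arithmetic tension you flag at the end (a $(q-1)$-factor product that cannot match the claimed $q$-factor formula) is precisely this contradiction surfacing, and no bookkeeping can repair it.

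What the paper actually does with Lemma \ref{Lemma alpha} is different: after relabeling so that the deleted generator is $m_q$, it never identifies $M$ with a smaller ideal. It introduces $M_2=(m_1,\ldots,m_{q-1})$ (a complete intersection) and $M_3=(m_1,\ldots,m_{q-1},m)$ (a \emph{dominant} almost complete intersection, to which the dominant case applies), and then computes $\e(S/M)$ by the Peskine--Szpiro formula on the Taylor resolution of $M$ itself, splitting the Taylor symbols into the four families $[\tau]$, $[\tau,m_q]$, $[\tau,m]$, $[\tau,m_q,m]$ with $\tau$ a Taylor symbol of $M_2$. Since $M_1$ is a complete intersection, $\deg[\tau,m_q,m]=\deg[\tau,m]+\deg(m_q/\gcd(m_q,m))$, and expanding the $q$th power binomially, the terms of exponent $k\leq q-2$ vanish because $\codim(S/M_2)=\codim(S/M_3)=q-1$ (Peskine--Szpiro below the codimension), while the $k=q-1$ term is evaluated using $\e(S/M_2)$ and the dominant-case value of $\e(S/M_3)$. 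That is how the missing factor $\deg\left(\dfrac{m_q}{\gcd(m_q,m)}\right)$ re-enters the final product --- through the binomial expansion, not through redundancy of a generator. Your proposal has no substitute for this step, so the nondominant case is a genuine gap.
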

 
\begin{proof}
If $M$ is dominant, the theorem follows from Corollary \ref{Corollary 3}. Thus, we may assume that $M$ is not dominant. By Lemma \ref{Lemma alpha}, for some $1\leq i\leq q$, the ideal $M' = (m_1,\ldots,\widehat{m_i},\ldots,m_q,m)$ is a dominant almost complete intersection, and the ideal $M'' = (m_1,\ldots,\widehat{m_i},\ldots,m_q)$ is a complete intersection. Without loss of generality, we may assume that $M_2=(m_1,\ldots,m_{q-1})$ is a complete intersection, and $M_3=(m_1,\ldots,m_{q-1},m)$ is a dominant almost complete intersection.

Denote by $[\theta_{ij}]$ (respectively, $[\tau_{ij}])$ the $j$th basis element of $\mathbb{T}_M$ (respectively, $\mathbb{T}_{M_2}$) in homological degree $i$. For $[\tau_{ij}]=[m_{r_1},\ldots,m_{r_i}]$, define  $[\tau_{ij},m_q]=[m_{r_1},\ldots,m_{r_i},m_q]$,  $[\tau_{ij},m]=[m_{r_1},\ldots,m_{r_i},m]$, and  $[\tau_{ij},m_q,m]=[m_{r_1},\ldots,m_{r_i},m_q,m]$.

Since $\codim(S/M) = q$, we have that
\begin{align*}
&(-1)^q q! \e(S/M)=\sum\limits_{i=1}^{q+1} (-1)^i \sum\limits_{j=1}^{{q+1}\choose i} \deg^q[\theta_{ij}]\\
&= \sum\limits_{i=0}^{q-1} (-1)^i \sum\limits_{j=1}^{{q-1}\choose i} \left(\deg^q[\tau_{ij}]- \deg^q[\tau_{ij},m_q]\right) + \sum\limits_{i=0}^{q-1} (-1)^i \sum\limits_{j=1}^{{q-1}\choose i} \left(\deg^q[\tau_{ij},m_q,m]- \deg^q[\tau_{ij},m]\right)\\
&=(-1)^q q! \e(S/M_1)+\sum\limits_{i=0}^{q-1} (-1)^i \sum\limits_{j=1}^{{q-1}\choose i} \left(\deg^q[\tau_{ij},m_q,m]- \deg^q[\tau_{ij},m]\right)\\
&=(-1)^q q!  \prod\limits_{i=1}^q \deg(m_i) + \sum\limits_{i=0}^{q-1} (-1)^i \sum\limits_{j=1}^{{q-1}\choose i} \left [\left(\deg[\tau_{ij},m]+\deg\left(\dfrac{m_q}{\gcd(m_q,m)}\right)\right)^q- \deg^q[\tau_{ij},m]\right]\\
& = (-1)^q q!  \prod\limits_{i=1}^q \deg(m_i)+ \\
&+ \sum\limits_{i=0}^{q-1} (-1)^i \sum\limits_{j=1}^{{q-1}\choose i} \left[ \sum\limits_{k=0}^q {q\choose k} \deg^k[\tau_{ij},m] \deg^{q-k}\left(\dfrac{m_q}{\gcd(m_q,m)}\right) - \deg^q[\tau_{ij},m]\right]\\
&=(-1)^q q!  \prod\limits_{i=1}^q \deg(m_i)+\\
&+  \sum\limits_{i=0}^{q-1} (-1)^i \sum\limits_{j=1}^{{q-1}\choose i} \left[ \sum\limits_{k=0}^{q-1} {q\choose k} \deg^k[\tau_{ij},m] \deg^{q-k}\left(\dfrac{m_q}{\gcd(m_q,m)}\right)\right]\\
&= (-1)^q q!  \prod\limits_{i=1}^q \deg(m_i)+\sum\limits_{k=0}^{q-1}{q\choose k} \deg^{q-k}\left(\dfrac{m_q}{\gcd(m_q,m)}\right) \sum\limits_{i=0}^{q-1} (-1)^i \sum\limits_{j=1}^{{q-1}\choose i} \deg^k[\tau_{ij},m]
\end{align*}

Since $\codim(S/M_2) = q-1$,
\[\sum\limits_{i=0}^{q-1} (-1)^i \sum\limits_{j=1}^{{q-1}\choose i} \deg^k [\tau_{ij}] = 0 \text{, for all } 1\leq k\leq q-2.\]
 Since $\codim(S/M_3) = q-1$, 
\[\sum\limits_{i=0}^{q-1}(-1)^i \sum\limits_{j=1}^{{q-1}\choose i} \deg^k[\tau_{ij}] - \sum\limits_{i=0}^{q-1}(-1)^i \sum\limits_{j=1}^{{q-1}\choose i} \deg^k[\tau_{ij},m] = 0 \text{, for all } 1\leq k\leq q-2.\]
Hence,
\[\sum\limits_{i=0}^{q-1}(-1)^i \sum\limits_{j=1}^{{q-1}\choose i} \deg^k[\tau_{ij},m] = 0 \text{, for all } 1\leq k\leq q-2.\]
Also,
\[(-1)^{q-1} (q-1)! \e(S/M_3) = \sum\limits_{i=0}^{q-1} (-1)^i \sum\limits_{j=1}^{{q-1}\choose i} \deg^{q-1} [\tau_{ij}] - \sum\limits_{i=0}^{q-1}(-1)^i \sum\limits_{j=1}^{{q-1}\choose i} \deg^{q-1} [\tau_{ij},m].\]
Thus,
\[\sum\limits_{i=0}^{q-1} (-1)^i \sum\limits_{j=1}^{{q-1}\choose i} \deg^{q-1} [\tau_{ij},m] = \sum\limits_{i=0}^{q-1} (-1)^i \sum\limits_{j=1}^{{q-1}\choose i} \deg^{q-1}[\tau_{ij}] - (-1)^{q-1} (q-1)! \e(S/M_3).\]

Combining these facts, we obtain
\begin{align*}
&(-1)^q q! \e(S/M)\\
& = (-1)^q q! \prod\limits_{i=1}^q \deg(m_i) + \sum\limits_{k=0}^{q-1}  {q\choose k} \deg^{q-k} \left(\dfrac{m_q}{\gcd(m_q,m)}\right) \sum\limits_{i=0}^{q-1} (-1)^i \sum\limits_{j=1}^{{q-1}\choose i} \deg^k [\tau_{ij},m]\\
&= (-1)^q q! \prod\limits_{i=1}^q \deg(m_i) +  \deg^q \left(\dfrac{m_q}{\gcd(m_q,m)}\right) \sum\limits_{i=0}^{q-1}(-1)^i {{q-1}\choose i} +\\
& + q \deg\left(\dfrac{m_q}{\gcd(m_q,m)}\right)\sum\limits_{i=1}^{q-1} (-1)^i \sum\limits_{j=1}^{{q-1}\choose i} \deg^{q-1} [\tau_{ij,m}]\\
&= (-1)^q q! \prod\limits_{i=1}^q \deg(m_i) +\\
&+ q \deg\left(\dfrac{m_q}{\gcd(m_q,m)}\right) \left[ \sum\limits_{i=1}^{q-1} (-1)^i \sum\limits_{j=1}^{{q-1}\choose i} \deg^{q-1} [\tau_{ij}] - (-1)^{q-1} (q-1)! \e(S/M_3)\right]\\
& = (-1)^q q! \prod\limits_{i=1}^q \deg(m_i) + q \deg\left(\dfrac{m_q}{\gcd(m_q,m)}\right) (-1)^{q-1}(q-1)! \e(S/M_2) -\\
& - q \deg\left(\dfrac{m_q}{\gcd(m_q,m)}\right) (-1)^{q-1} (q-1)! \e(S/M_3)\\
&= (-1)^q q! \prod\limits_{i=1}^q \deg(m_i) +\deg\left(\dfrac{m_q}{\gcd(m_q,m)}\right) (-1)^{q-1} q! \prod\limits_{i=1}^{q-1} \deg(m_i) - \\
& - \deg\left(\dfrac{m_q}{\gcd(m_q,m)}\right)(-1)^{q-1}q! \left[\prod\limits_{i=0}^{q-1}\deg(m_i) - \prod\limits_{i=1}^{q-1} \deg\left(\dfrac{m_i}{\gcd(m_i,m)}\right)\right]\\
&= (-1)^q q! \prod\limits_{i=1}^q \deg(m_i) + (-1)^{q-1} q! \prod\limits_{i=1}^q \deg\left(\dfrac{m_i}{\gcd(m_i,m)}\right).
\end{align*}

Hence,
\[\e(S/M) = \prod\limits_{i=1}^q \deg(m_i) - \prod\limits_{i=1}^q \deg\left(\dfrac{m_i}{\gcd(m_i,m)}\right).\]
 \end{proof}

With this new approach, we have been able to prove things without assuming dominance. It is natural to ask whether this reasoning can be applied to extend the results of Sections 5 and 6. For instance, if in the definition of a stem ideal we do not required the ideal to be dominant, would Theorem \ref{Theorem stem-and-leaf} still hold? Moreover, would Theorem \ref{Theorem 2} still hold (in some form) if we did not require the ideal to be dominant?

  \bigskip

\noindent \textbf{Acknowledgements}: I am grateful to Chris Francisco who, after all these years, has remained a mentor to me. My wife Danisa always types my articles and gives me valuable feedback on their content. It is my privilege to work side by side with her. Many thanks, sweetheart!

\end{document}